\DeclareFontFamily{U}{matha}{\hyphenchar\font45}
\DeclareFontShape{U}{matha}{m}{n}{
      <5> <6> <7> <8> <9> <10> gen * matha
      <10.95> matha10 <12> <14.4> <17.28> <20.74> <24.88> matha12
      }{}
\DeclareSymbolFont{matha}{U}{matha}{m}{n}
\DeclareMathSymbol{\odiv}         {2}{matha}{"63}
\definecolor{qqqqff}{rgb}{0.,0.,1.}
\definecolor{cqcqcq}{rgb}{0.7529411764705882,0.7529411764705882,0.7529411764705882}
\definecolor{ttqqqq}{rgb}{0.2,0.,0.}
\definecolor{qqqqff}{rgb}{0.,0.,1.}
\definecolor{xdxdff}{rgb}{0.49019607843137253,0.49019607843137253,1.}
\definecolor{zzttqq}{rgb}{0.6,0.2,0.}
\definecolor{cqcqcq}{rgb}{0.7529411764705882,0.7529411764705882,0.7529411764705882}
\definecolor{yqyqyq}{rgb}{0.5019607843137255,0.5019607843137255,0.5019607843137255}
\definecolor{uuuuuu}{rgb}{0.26666666666666666,0.26666666666666666,0.26666666666666666}
\definecolor{xdxdff}{rgb}{0.49019607843137253,0.49019607843137253,1.}
\definecolor{qqqqff}{rgb}{0.,0.,1.}
\definecolor{darkgreen}{rgb}{0.0, 0.5, 0.13}
\newcommand{\PP}{\mathbb{P}}
\newcommand{\ZZ}{\mathbb{Z}}
\newcommand{\RR}{\mathbb{R}}
\newcommand{\CC}{\mathbb{C}}
\newcommand{\TT}{\mathbb{T}}
\newcommand{\B}{\mathcal{B}}
\newcommand{\C}{\mathcal{C}}
\newenvironment{manualtheorem}[1]{%
	\manualtheoreminner
}{\endmanualtheoreminner}
\newcommand{\Dr}{\operatorname{Dr}}
\newcommand{\Fl}{\operatorname{Fl}}
\newcommand{\FlDr}{\operatorname{FlDr}}
\newcommand{\QDr}{\operatorname{QDr}}
\newcommand{\QGr}{\operatorname{QGr}}
\DeclareMathOperator{\id}{id}
\DeclareMathOperator{\Gr}{Gr}
\DeclareMathOperator{\sign}{sign}
\DeclareMathOperator{\pr}{pr}
\DeclareMathOperator{\supp}{supp}
\newcommand{\trop}{\operatorname{trop}}
\newcommand{\tropbar}{\overline{\operatorname{trop}}}
\newcommand{\defemph}[1]{\emph{#1}}
\newcommand{\val}{\mathrm{val}}
\newtheorem{theo}{Theorem}[section]
\newtheorem{theorem}{Theorem}[section]
\newtheorem{proposition}[theo]{Proposition}
\newtheorem{corollary}[theo]{Corollary}
\newtheorem{lemma}[theo]{Lemma}
\theoremstyle{definition}
\newtheorem{definition}[theo]{Definition}
\newtheorem{proposition-definition}[theo]{Proposition-Definition}
\theoremstyle{remark}
\newtheorem{remark}[theo]{Remark}
\theoremstyle{remark}
\newtheorem{example}[theo]{Example}
\newcommand{\xqed}[1]{
    \leavevmode\unskip\penalty9999 \hbox{}\nobreak\hfill
    \quad\hbox{\ensuremath{#1}}}
\keywords{Tropical varieties, Combinatorics, Quiver Grassmannians, Matroids.
}
\subjclass{14T15, 05B35, 16G20, 14N20, 14T20.}
\title{Tropical Quiver Grassmannians}
\author{Giulia Iezzi, Victoria Schleis}
\date{}
\begin{document}

\maketitle
\begin{abstract}
    We introduce quivers of valuated matroids  and study their tropical parameter spaces. We define \emph{quiver Dressians}, which parametrize containment of tropical linear spaces after tropical matrix multiplication, and show that tropicalizations of quiver Grassmannians
    parametrize the realizable analogue. We further introduce affine morphisms of valuated matroids and show compatibility with weakly monomial quiver representations. Finally, we show that starting in ambient dimension $2$, quiver Dressians can have nonrealizable points.
\end{abstract}
\section{Introduction}\label{sec intro}

	The Grassmannian $G(r;n)$ parametrizes $r$-dimensional linear subspaces $U$ of an $n$-dimen-sional $K$-vector space $V$, and can be embedded in the projective space $\mathbb{P}^{\binom{n}{r}-1}$ with equations given by the Grassmann-Pl\"{u}cker relations.   The tropicalization $\tropbar(G(r;n))$ of the Grassmannian (as a tropical subvariety of the \emph{tropical projective space} $\mathbb{P}\left(\mathbb{T}^{\binom{n}{r}}\right)$, see Section \ref{sec: prelims tropicalprojectivespace}) parametrizes \defemph{realizable} valuated matroids of rank $r$ on $n$ elements, or equivalently \defemph{realizable} tropical linear spaces (i.e. tropicalizations of linear spaces) of dimension $r$ in $\mathbb{P}(\mathbb{T}^n)$. The object parametrizing all valuated matroids of rank $r$ on $n$ elements, or equivalently all tropical linear spaces of dimension $r$ inside $\mathbb{P}(\mathbb{T}^n)$, is a tropical prevariety $\Dr(r,n)$ called the \emph{Dressian} (see \cite{speyer2008tropicallinearspaces}). Its equations are the tropical Grassmann-Pl\"{u}cker relations, and $\tropbar(G(r;n)) \subseteq \Dr(r,n)$. This  inclusion is strict for large enough $n$. 
 
    Tropical linear spaces correspond to valuated matroids. They are foundational objects which are not only of intrinsic importance for the area of tropical geometry \cite{speyersturmfels2004grassmannian,speyer2008tropicallinearspaces}, in which they appear as the building blocks for tropical manifolds and tropical ideals \cite{maclagan2018tropicalideals} and parametrize hyperplane arrangements, but also connect tropical geometry and matroid theory. Further, they have applications in computational biology as the parameter spaces of phylogenetic trees, \cite{develin2004tropical,maclagan2015book}, and in mathematical physics, describing different combinatorial types of scattering amplitudes \cite{scattering2016,scattering2021}.

    Arrangements of linear spaces that satisfy fixed linear relations to each other can be represented by quivers and parametrized by the quiver Grassmannian. A quiver is a finite directed graph $Q$, with vertices $V$ and arrows $A$ that organizes subspaces $U$ of vector spaces. Each arrow represents a linear map and each vertex a vector space of fixed dimension. Such an assignment of vector spaces is called a \emph{quiver subrepresentation} if $\alpha(U_i)\subseteq U_j$ for all $\alpha$ with source $U_i$ and target $U_j$.
    Quiver Grassmannians are projective varieties parametrizing such subrepresentations. They first appeared in \cite{crawle-boevey1989quiver, schofield1992quiver} and have since been extensively studied. 
    They have been employed in cluster algebra theory \cite{caldero2006cluster} as well as for studying linear degenerations of the flag variety \cite{fourier2020lineardegenerations,cerulliirelli2012quiveranddegenerate,feigin2010grassmanndegenerations, feigin2013frobeniussplittin}.
Notably, every projective variety is isomorphic to a quiver Grassmannian \cite{reineke2013projectiveisquiver,ringel2018quiver}, and every quiver Grassmannian can be embedded in a product of Grassmannians. The equations of this embedding were described in \cite{lorscheid2019pluckerelations}.

An important example of quiver Grassmannians is the flag variety $\Fl(\mathbf{d};n)$, parametrizing flags of linear spaces, i.e. collections of linear subspaces $(U_1,\dots,U_k)$ of $V$ with $\dim(U_i) = d_i$ and $U_1\subseteq U_2 \subseteq \dots \subseteq U_k$, see Example \ref{ex: flag variety}. 

The tropical analogue of flag varieties, the \emph{flag Dressian} $\FlDr(\mathbf{d};n)$, was defined by Haque \cite{haque2012tropical} and further analyzed in \cite{brandt2021tropicalflag}, showing that $\FlDr(\mathbf{d};n)$ parametrizes valuated flag matroids or, equivalently, flags of tropical linear spaces (see  \cite[Theorem A]{brandt2021tropicalflag} and \cite[Theorem 1]{haque2012tropical}).
	
Further, linear degenerations of flag varieties, realized as quiver Grassmannians by using the $A$-type quiver with different linear maps, have already been related to tropical geometry. In \cite{borzi2023lineardegenerate}, Borz\`i and the second author established a similar correspondence to the case of flag matroids for linear degenerate valuated flag matroids and their associated linear degenerate flags of tropical linear spaces.

	In this paper, for a finite quiver $Q$, with a $Q$-representation  $R$ and a fixed dimension vector $\mathbf{d}=(d_1,\dots,d_k)$ we define the \emph{quiver Dressian}  $\QDr(R,\mathbf{d};n)$   as the tropical prevariety 
	cut out by the quiver Pl\"{u}cker relations, see Definition \ref{def: quiver pluecker relations}.
 We prove the following main result, generalizing \cite[Theorem A]{borzi2023lineardegenerate}. 
	
	\begin{manualtheorem}{A}[Theorem \ref{prop: equivalence quiver dressian containment after tropical flag}]\label{thm: main_quiver_correspondence}
		Let $\boldsymbol{\mu}=(\mu_1,\dots,\mu_k)$ be valuated matroids and $Q$ be a finite quiver. Let each arrow $\alpha\in A$ of $Q$ be represented by a matrix $A^{\alpha}$, and let $s(\alpha)$ denote its source and $t(\alpha)$ its target vertex.  The following statements are equivalent: 
		\begin{enumerate}[label=(\alph*)]
			\item $\boldsymbol{\mu} \in \QDr(R,\mathbf{d};n)$;
			\item $\val(A^{\alpha})\odot\tropbar(\mu_{s(\alpha)})\subseteq \tropbar(\mu_{t(\alpha)})$ for all $\alpha \in A$.
		\end{enumerate}
	\end{manualtheorem}
    In words, the quiver Dressian parametrizes tropical linear spaces satisfying the containment conditions described by the quiver.
    For quivers with maps defined by \emph{weakly monomial} matrices (c.f. Definition \ref{def: weaklymonomial}) we connect quiver Dressians to morphisms of valuated matroids:
    \begin{manualtheorem}{B}\label{thm: main_monomial_matrices}
		Let $\boldsymbol{\mu}=(\mu_1,\dots,\mu_k)$ be valuated matroids and $Q$ be a finite quiver whose maps are given as weakly monomial matrices, $\boldsymbol{\mu} \in \QDr(R,\mathbf{d};n)$ is additionally equivalent to
        \begin{enumerate}
			\item[(c)] Each arrow $\alpha\in A$ with $s(\alpha) = U_i$ and $t(\alpha) = U_j$ has an associated contravariant \emph{affine} morphism of valuated matroids $\phi_{\alpha}$  (c.f. Definition \ref{def: affinemorphism})  with $s(\phi_{\alpha}) = \mu_j$ and $t(\phi_{\alpha}) = \mu_i$.
		\end{enumerate}
    \end{manualtheorem}

    We obtain an analogous theorem for the tropicalization of the quiver Grassmannian and realizable tropical linear spaces. 

	\begin{manualtheorem}{C}\label{thm: main_quiver_correspondence_realizable}
		Let $\boldsymbol{\mu}=(\mu_1,\dots,\mu_k)$ be valuated matroids and $Q$ be a finite quiver. Let each arrow $\alpha\in A$ of $Q$ be represented by a matrix $A^{\alpha}$, and let $s(\alpha)$ denote its source and $t(\alpha)$ its target vertex. Let all realizations be in algebraically closed fields with nontrivial valuation.  The following statements are equivalent:
		\begin{enumerate}[label=(\alph*)]
			\item $\boldsymbol{\mu} \in \tropbar(\QGr(R,\mathbf{d};n))$;
			\item $\val(A^{\alpha})\odot\tropbar(\mu_{s(\alpha)})\subseteq \tropbar(\mu_{t(\alpha)})$ for all $\alpha \in A_Q$ and there is a quiver subrepresentation $(N_i)_{i\in V}$ of $Q$ such that $\tropbar(\mu_i) = \tropbar(N_i)$.
        \end{enumerate}
        Further, if all realizations $A^{\alpha}$ are weakly monomial matrices, the above are equivalent to:
        \begin{enumerate}
            \item[(c)] Each arrow $\alpha\in A$ with $s(\alpha) = U_i$ and $t(\alpha) = U_j$ has an associated contravariant  \emph{realizable affine} morphism of valuated matroids $\phi_{\alpha}$  (c.f. Definition \ref{def: affinemorphism})  with $s(\phi_{\alpha}) = \mu_j$ and $t(\phi_{\alpha}) = \mu_i$.
		\end{enumerate}
	\end{manualtheorem}
    We analyze the realizability of points in quiver Dressians, i.e. the relation between tropicalized quiver Grassmannians and quiver Dressians. 

    \begin{manualtheorem}{D}\label{thm: main_realizability}
    For $n=1$, for any finite quiver $Q$ and any $Q$-representation $R$, $\QDr(R,\mathbf{d};n)=\tropbar(\QGr(R,\mathbf{d};n))$.
    
    For any $n\geq 2$, $\tropbar(\QGr(R,\mathbf{d};n))\subseteq\QDr(R,\mathbf{d};n)$ and there exists a quiver $Q$ and a $Q$-representation $R$ where the containment is strict.
    \end{manualtheorem}
    Maps of tropical linear spaces have been studied in \cite{crowley2020modules,frenk2013thesis,mundinger2018image}.

    Flag matroids and their correspondences were generalized to matroids over tracts in \cite{jarra2022flag}. In the upcoming work \emph{Quiver Matroids} \cite{jarra2022qivermatroids}, Jarra, Lorscheid, and Vital study  morphisms of matroids over the more general class of \emph{matroids over perfect idylls}, and construct quiver matroids in this setting. 
 
	This paper is structured as follows. In Section \ref{sec prelims tropgeo} and \ref{sec prelims valuated} we fix our notation and recall the definitions of tropical linear spaces. In Section \ref{sec maps}, we study different notions of linear maps in tropical geometry and introduce affine morphisms of valuated matroids. In Section \ref{sec quiver grassmannian}, we recall classical quiver representations. In Sections \ref{sec tropicalized quiver grassmannian} and \ref{subsec quiver dressians} we arrange our matroids using quivers, define the quiver Dressian and prove the structure results Theorems \ref{thm: main_quiver_correspondence}, \ref{thm: main_monomial_matrices}, and \ref{thm: main_quiver_correspondence_realizable}. Finally, in Section \ref{sec realizability}  we give quiver representations whose quiver Dressians do not coincide with their tropicalized quiver Grassmannian, Examples \ref{fig: nonrealizable n 2}, \ref{ex nonrealizable n = 3} and \ref{ex:nonrealizable quiver n = 4} to prove the statements on realizability of points, Theorem \ref{thm: main_realizability}.

\begin{figure}
    \centering
 \begin{tabular}{c c}\begin{minipage}{6cm}
 \begin{tikzcd}[ampersand replacement=\&, column sep= large, row sep = tiny]
 \& \underset{\textcolor{red}{[2]}}{\overset{\CC^4}{\bullet}} \ar[dr,"\id"] \& \\
\underset{\textcolor{darkgreen}{[1]}}{\overset{\CC^4}{\bullet}} \ar[ur,"\id"] \ar[dr, "\id"]\& \& \underset{\textcolor{blue}{[3]}}{\overset{\CC^4}{\bullet}} \\
\& \underset{\textcolor{orange}{[2]}}{\overset{\CC^4}{\bullet}} \ar[ur,"\id"]
\end{tikzcd} \end{minipage}   & \begin{minipage}{6cm}
\includegraphics[]{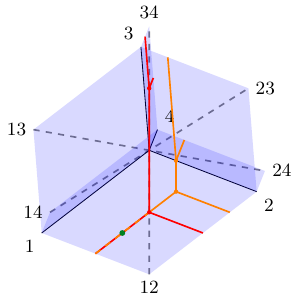}
\end{minipage} 	
 \end{tabular}
    \caption{The numbers below the vertices of the quiver represent the fixed dimensions of the corresponding subspaces. Any subrepresentation consisting of such subspaces describes the containment of a point in two lines, which are both contained in a common plane. On the right, a collection of tropical linear spaces satisfying these conditions.}
    \label{fig:enter-label}
\end{figure}

\begin{center}\textsc{Acknowledgements}
\end{center}

We thank Xin Fang, Ghislain Fourier, Martina Lanini, Oliver Lorscheid, Manoel Jarra, Hannah Markwig and Eduardo Vital for helpful discussions. 
The second author further thanks Alessio Borz\`i, Alex Fink and Felipe Rinc\'on for helpful pointers and discussions. Part of the work on this project was done while the first author visited the University of T\"{u}bingen in the Spring of 2023. Both authors were supported by the Deutsche Forschungsgemeinschaft (DFG, German Research
	Foundation), Project-ID 286237555, TRR 195.
\section{Tropical linear spaces and their linear maps}\label{sec: prelims tropicalprojectivespace}
	We assume the reader to be familiar with basic matroid theory, see \cite{oxley2011, welsh1976mtheory}. We fix the notation $[n] = \{1,2,\dots, n\}$ and $\binom{[n]}{r} = \{S \subset [n] : |S|=r\}$. We write $\mathbf{1}=(1,\dots,1)\in\mathbb{R}^{n}$. Every valuation is assumed to be non-Archimedean.
 
    \subsection{Tropical Geometry}\label{sec prelims tropgeo}
    
	In this section, we review the basics of tropical geometry, see \cite{maclagan2015book}, following the min-convention.
	
	We define the tropical numbers as $\mathbb{T} = \mathbb{R} \cup \{ \infty \}$ and consider the semifield $(\mathbb{T},\oplus,\odot)$, called the \defemph{tropical semifield}, where the operations are $a \oplus b = \min\{a,b\}$ and $a \odot b = a+b$ for every $a,b \in \mathbb{T}$. For multiplication with the tropical multiplicative inverse, we write $a\odiv b = a-b$. The \defemph{tropical projective space} is $\mathbb{P}(\mathbb{T}^{n}) = (\mathbb{T}^{n} \setminus \{ (\infty,\dots,\infty) \}) / \mathbb{R} \mathbf{1} = (\mathbb{T}^{n} \setminus \{ (\infty,\dots,\infty) \}) / \sim$. Here, $\sim$ is the equivalence relation $\mathbf{u} \sim \mathbf{v}$ if $\mathbf{u} = \mathbf{v} + c \mathbf{1}$ for some $c \in \mathbb{R}$. A \defemph{tropical polynomial} is an element of the semiring $\mathbb{T}[x_{1},\dots,x_n]$ in the variables $x_{1},\dots,x_n$ with coefficients in $\mathbb{T}$. 
	The \defemph{tropical hypersurface} of a tropical polynomial $F = \bigoplus_{u \in \mathbb{N}^{n}} c_u \odot x^u \in \mathbb{T}[x_{1},\dots,x_n]$ is defined as
	\[ V(F) = \Bigg\{ x \in \mathbb{P}(\mathbb{T}^n) : \min_{ u \in \mathbb{N}^{n}}\left\{ c_u + \sum_{i=1}^n u_i \cdot x_i \right\}\text{is achieved at least twice} \Bigg\} \]
	where, whenever $\min_{u\in \mathbb{N}^{n}}\{ c_u + \sum_{i=1}^n u_i \cdot x_i \} = \infty$, we adopt the convention that the minimum is achieved  at least twice, even if the expression is a tropical monomial. The \defemph{tropical variety} of an ideal of tropical polynomials $J \subseteq \mathbb{T}[x_{1},\dots,x_n]$ is then defined by
	\[ V(J) = \bigcap_{F \in J} V(F) \subseteq \mathbb{P}(\mathbb{T}^n). \]
	In the following, let $K$ be a field with valuation $\val:K \rightarrow \mathbb{T}$. The \defemph{tropicalization} of a polynomial $f = \sum_{u \in \mathbb{N}^{{n}}} a_u x^u \in K[x_{1},\dots,x_n]$ is the tropical polynomial
	\[ \trop(f) = \bigoplus_{u \in \mathbb{N}^{{n}}} \val(a_u) \odot x^u \in \mathbb{T}[x_{1},\dots,x_n]. \]
	
	The \defemph{tropicalization} $\trop(I)$ of an ideal $I \subseteq K[x_1,\dots,x_n]$ is the ideal of tropical polynomials generated by the tropicalizations of all polynomials in $I$:
	\[ \trop(I) = \{ \trop(f) : f \in I \} \subseteq \mathbb{T}[x_1,\dots,x_n]. \]
	Over an algebraically closed field $K$ with a non-trivial valuation, the \defemph{tropicalization} $\tropbar(X)$ of a subvariety $X \subseteq \mathbb{P}_K^n$, is defined by
	\[ \tropbar(X) = \overline{ \{ (\val(x_1),\dots,\val(x_{n})) \in \mathbb{P}(\mathbb{T}^{n}) : [x_1:\dots:x_{n}] \in X \}}, \]
	where the closure is with respect to the Euclidean topology induced on $\mathbb{P}(\mathbb{T}^{n})$.
	
	It follows that there are two possible ways of constructing a tropical variety from a homogeneous ideal $I \subseteq K[x_1,\dots,x_n]$: we can first tropicalize the ideal, and then take its tropical variety $V(\trop(I))$, or we can consider the projective variety $V(I)$ and tropicalize it to obtain $\tropbar(V(I))$. The Fundamental Theorem of Tropical Geometry assures us that, over algebraically closed fields with nontrivial valuation, these two operations yield the same result, i.e. that $ \tropbar(V(I)) = V(\trop(I))$, see \cite[Theorem 6.2.15]{maclagan2015book}. Note that  $\tropbar(X)$ can contain points in which some of the coordinates are $\infty$. For a thorough description of projective tropical varieties, we refer to \cite[Section 2]{brandt2021tropicalflag} and \cite[Section 6.2]{maclagan2015book}, and to \cite{shaw2013matroidfan} for the construction of tropical projective spaces.

	\subsection{Valuated matroids and tropical linear spaces}\label{sec prelims valuated}
	\begin{definition}\label{def valuated matroid}
		A \defemph{valuated matroid} of rank $r$ on the ground set $[n]$ is a function $\nu: \binom{[n]}{r} \rightarrow \mathbb{T}$ such that $\nu(B) \neq \infty$ for some $B \in \binom{[n]}{r}$ and, for all $I,J \in \binom{[n]}{r}$ and $i \in I\setminus J$, there exists $j \in J \setminus I$ satisfying
		\[ \nu(I)+\nu(J) \geq \nu((I \setminus i) \cup j) + \nu((J \setminus j) \cup i). \]
	\end{definition}
	Given a valuated matroid $\nu: \binom{[n]}{r} \rightarrow \mathbb{T}$, the set $\{ B \in \binom{[n]}{r} : \nu(B) \neq \infty \}$ forms the bases of a matroid $N$, which we call the \defemph{underlying matroid} of $\nu$. We say that two valuated matroids $\mu$ and $\nu$ on a common ground set $[n]$ are \emph{equivalent} if there exists $a \in \mathbb{R}$ such that $\mu(B) = \nu(B) + a$ for every $B \in \binom{[n]}{r}$. In other words, every equivalence class of a valuated matroid $\nu: \binom{[n]}{r} \rightarrow \mathbb{T}$ can be seen as a point in  $\mathbb{P}\left(\mathbb{T}^{\binom{n}{r}}\right)$. Throughout, we only consider valuated matroids up to equivalence.
 
    Every linear space has an associated valuated matroid, constructed as follows. Consider a field $K$
	with valuation $\val:K \rightarrow \mathbb{T}$ and an $r$-dimensional vector subspace $L$ of $K^n$ given as the minimal row span of a matrix $A$. We denote by $(p_I)_I$ the \emph{Pl\"ucker coordinates} of $A$, where $p_I$ is the minor of $A$ indexed by $I\in\binom{[n]}{r}$. Then, the function $\mu(A): \binom{[n]}{r} \rightarrow \mathbb{T}$ defined by $I \mapsto \val(p_I)$ is a valuated matroid and we denote its underlying matroid by $M(A)$. Valuated matroids arising in this way are called \defemph{realizable} (over $K$). 
	
	\begin{definition}[{\cite[Definition 4.2.2, Proposition 7.4.7]{brandt2021tropicalflag,brylawski1986matroidquotients}}]
		Let $\mu$ and $\nu$ be two valuated matroids on the ground set $[n]$ of rank $r \leq s$ respectively. We say that $\mu$ is a \defemph{valuated matroid quotient} of $\nu$, denoted $\mu \twoheadleftarrow \nu$, if for every $I \in \binom{[n]}{r}$, $J \in \binom{[n]}{s}$ and $i \in I \setminus J$, there exists $j \in J \setminus I$ such that
		\[ \mu(I) + \nu(J) \geq \mu(I \cup j \setminus i) + \nu(J \cup i \setminus j). \]
	\end{definition}
 
	Matroid quotients describe containment of linear spaces. If $L_1 \subseteq L_2$ are two linear subspaces of $K^n$ generated as the row span of two matrices $A_1$ and $A_2$ respectively, the induced matroids form a quotient, $\mu(A_1) \twoheadleftarrow \mu(A_2)$ (See \cite[Example 4.1.2]{brandt2021tropicalflag}). Valuated matroid quotients arising in this way are called \defemph{realizable} (over $K$). In general, a matroid quotient of two realizable matroids is not necessarily realizable (see \cite[Section 1.7.5, Example 7]{borovik2003coxetermatroids}).

	\begin{definition}\label{def: valuated circuit}
		Let $\mu$ be a valuated matroid of rank $r$ on $[n]$. For each $I \in \binom{[n]}{r+1}$ define an element $C_\mu(I) \in \mathbb{T}^n$ by
		\[ C_\mu(I)_i = \begin{cases}
			\mu(I \setminus i)  & i \in I, \\
			\infty & i \notin I.
		\end{cases}
		\]
		The set of \defemph{valuated circuits} $\mathcal{C}(\mu)$ of $\mu$ is defined as the image of
		\[ \left\{ C_\mu(I) : I \in \binom{[n]}{r+1} \right\} \setminus \{ (\infty,\dots,\infty) \}. \] 
        in $\mathbb{P}(\mathbb{T}^n)$.
 	\end{definition}
	\begin{definition}\label{def: tropicallinearspace}
		Let $\mu$ be a valuated matroid on $[n]$. The \defemph{tropical linear space} of $\mu$ is the tropical variety
		\[ \tropbar(\mu) = \bigcap_{C \in \mathcal{C}(\mu)} V \left( \bigoplus_{i \in [n]} C_i \odot x_i  \right) \subseteq \mathbb{P}(\mathbb{T}^n). \]
	\end{definition}
	
	\begin{definition}
		Let $r \leq n$ be a nonnegative integer. The \defemph{Grassmann-Pl\"{u}cker relations} are  polynomials in the variables $\{p_I : I \in \binom{[n]}{r} \}$ with coefficients in $K$,
		\[ \mathscr{P}_{r;n} = \left\{ \sum_{j \in J \setminus I} \sign(j;I,J) p_{I \cup j} p_{J \setminus j} : I \in \binom{[n]}{r-1}, J \in \binom{[n]}{r+1} \right\},  \]
		where $\sign(j;I,J) = (-1)^{ |\{ j' \in J : j < j' \}| + |\{ i \in I : i > j \}|  }$. These relations define the image of the Grassmannian $\Gr(r;n)$ in the projective space $\mathbb{P}^{\binom{n}{r}-1}$ via the Pl\"{u}cker embedding. The tropicalizations of the Pl\"{u}cker relations are denoted by $\mathscr{P}^{\trop}_{r;n}$.
	\end{definition}
	
	\begin{remark}\label{rmk:contrasting dressian and grassmannian}
		Let $K$ be an algebraically closed field with nontrivial valuation. Grassmannians over $K$ have two analogues in tropical geometry. \emph{Tropical Grassmannians} $\tropbar(G(r;n))$ are tropicalizations of their classical analogues. These \emph{tropical varieties} parametrize \emph{tropicalized} objects, i.e. tropicalizations of linear subspaces of $K^n$ of dimension $r$ (see \cite[Theorem 3.8]{speyersturmfels2004grassmannian}).
		
		On the other hand, the \emph{Dressians} $\Dr(r;n)$ are the intersections of the tropical hypersurfaces given by the Pl\"ucker relations. They are \emph{tropical prevarieties}, i.e. intersections of tropical hypersurfaces. Dressians parametrize \emph{tropical} objects, i.e. tropical linear spaces in $\mathbb{P}(\mathbb{T}^n)$ as in Definition \ref{def: tropicallinearspace}. 
	\end{remark}
	
    \subsection{Morphisms of valuated matroids, matrix multiplication and tropical linear spaces}\label{sec maps}

	In the following, let $K$ be a field with (potentially trivial) valuation $\val:K \rightarrow \mathbb{T}$.
	
\subsubsection{Images of linear spaces under matrix multiplication}\label{sec matrixmult} After fixing bases for each vector space assigned to a vertex in the quiver representation, each map of the quiver representation can be given as a matrix.
Hence, we first study the behavior of tropical linear spaces under matrix multiplication. For a matrix $A$, we write $\val(A):=\big(\val(A_{ij})\big)_{ij}$ for the matrix with valuation applied to all entries, and write $\val(A)\odot \tropbar(\mu)$ for the pointwise tropical matrix multiplication of $\val(A)$ with $\tropbar(\mu)$.

\begin{lemma}\label{lem: image is tropically convex}
    Let $\tropbar(\mu)$ be a tropical linear space in $\mathbb{P}(\mathbb{T}^n)$ and $A\in K^{n\times m}$. Then $\val(A)\odot \tropbar(\mu)$ is tropically convex.
\end{lemma}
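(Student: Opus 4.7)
The plan is to decompose the claim into two independent observations: (i) every tropical linear space $\tropbar(\mu)$ is itself tropically convex, and (ii) tropical matrix multiplication preserves tropical convexity. Combining them yields the lemma immediately, so the work is just to verify these two facts from the definitions given in Sections~\ref{sec prelims tropgeo} and~\ref{sec prelims valuated}.

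For (i), I would argue directly from Definition~\ref{def: tropicallinearspace}: $\tropbar(\mu)$ is the intersection of the tropical hypersurfaces $V\!\bigl(\bigoplus_i C_i\odot x_i\bigr)$ as $C$ ranges over the valuated circuits of $\mu$. It therefore suffices to show each such hypersurface is tropically convex, since intersections of tropically convex sets are tropically convex. So fix a valuated circuit $C$, take $x,y$ at which $\min_i(C_i+x_i)$ and $\min_i(C_i+y_i)$ are attained at least twice, and fix scalars $a,b\in\mathbb{T}$. A short three-case analysis, depending on whether $a\odot\min_i(C_i+x_i)$ is less than, equal to, or greater than $b\odot\min_i(C_i+y_i)$, shows that the minimum of $C_i\odot(a\odot x\oplus b\odot y)_i$ is again attained at least twice: in the strict cases one inherits two minimizing indices from whichever of $x$ or $y$ dominates, and in the equality case one takes the union of the two minimizing sets.

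For (ii), write $B=\val(A)$ and take any two points $B\odot x,\,B\odot y\in B\odot\tropbar(\mu)$ with $x,y\in\tropbar(\mu)$. For arbitrary $a,b\in\mathbb{T}$, distributivity of $\odot$ over $\oplus$ and commutativity/associativity of scalar $\odot$-multiplication give
\[ a\odot(B\odot x)\oplus b\odot(B\odot y)\;=\;B\odot\bigl(a\odot x\oplus b\odot y\bigr). \]
By (i), the argument $a\odot x\oplus b\odot y$ still lies in $\tropbar(\mu)$, so the right-hand side lies in $B\odot\tropbar(\mu)$, proving tropical convexity of the image.

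I do not expect a substantive obstacle; the argument is essentially formal. The only bookkeeping concerns coordinates equal to $\infty$ and the passage to $\mathbb{P}(\mathbb{T}^n)$, but both are compatible with the min-convention (where $\infty$ is the additive identity) and with the $\odot$-homogeneity of matrix multiplication, so neither affects the case analysis in (i) nor the identity used in (ii).
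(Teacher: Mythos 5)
Your proposal is correct and its main step is exactly the paper's argument: both proofs reduce the claim to the identity $\lambda\odot(\val(A)\odot v)\oplus\rho\odot(\val(A)\odot w)=\val(A)\odot(\lambda\odot v\oplus\rho\odot w)$, which holds by distributivity of tropical matrix multiplication and its compatibility with scalar multiplication, together with the fact that $\tropbar(\mu)$ is itself tropically convex. The only difference is that the paper imports that latter fact as a citation (\cite[Theorem 1.1]{hampe2015convex}), whereas you reprove it directly from Definition~\ref{def: tropicallinearspace} via the case analysis on tropical hyperplanes cut out by valuated circuits; your direct argument is sound (and self-contained), so this is a cosmetic rather than substantive divergence.
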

\begin{proof}
   Let $\Tilde{v}, \Tilde{w} \in \val(A) \odot \tropbar(\mu)$ and $\lambda,\rho \in \RR$. We need to show that $\lambda\odot\Tilde{v}\oplus\rho \odot\Tilde{w}\in \val(A) \odot \tropbar(\mu),$ i.e. that $\val(A)\odot \tropbar(\mu)$ is tropically convex as defined in the introduction of \cite{develin2004tropical}. By definition of $\val(A) \odot \tropbar(\mu)$, there exist $v,w$ such that $\Tilde{v}= \val(A)\odot v$ and $\Tilde{w} = \val(A) \odot w$. By \cite[Theorem 1.1]{hampe2015convex}, tropical linear spaces are tropically convex, thus $\lambda\odot v\oplus\rho \odot w\in\tropbar(\mu)$. Since tropical matrix multiplication is distributive and commutes with tropical scalar multiplication, \begin{align*}
(\lambda\odot\Tilde{v})\oplus(\rho \odot\Tilde{w}) &= \big(\lambda\odot \big(\val(A)\odot v\big)\big)\oplus\big(\rho \odot \big(\val(A) \odot w\big)\big)\\ &= \val(A)\odot(\lambda\odot v\oplus\rho \odot w)\in \val(A)\odot\tropbar(\mu).  \qedhere     
   \end{align*}
\end{proof} 

In general, images of tropical linear spaces under pointwise matrix multiplication are not tropical linear spaces, as can be seen in the next example.
\begin{example}\label{ex1}
  We consider the trivially valued matroid  given by the map $\mu:\binom{[3]}{2}\rightarrow\RR\cup\{\infty\}, I \mapsto 0$, and the matrices 
  \[A_1 = \begin{bmatrix}
      1 & 1 & 0 \\
      0 & 1 & 0 \\
      0 & 0 & 1
  \end{bmatrix}, A_2 =\begin{bmatrix}
      1 & 1 & 0 \\
      0 & 1 & 1 \\
      0 & 0 & 1
  \end{bmatrix}. \]
  The tropical linear space $\tropbar(\mu)$ and the polyhedral complexes $\val(A_1)\odot\tropbar(\mu)$ and $\val(A_2)\odot\tropbar(\mu)$ are depicted below. Both  $\val(A_1)\odot\tropbar(\mu)$ and $\val(A_2)\odot\tropbar(\mu)$ are not tropical linear spaces, as these polyhedral complexes cannot be assigned balanced weights.
\begin{center}
  \begin{tabular}{c c c}
      \begin{tikzpicture}[scale = 0.5]
    \draw (0,0) -- (3,0) (0,0) -- (0,3) (0,0) -- (-2,-2);
\end{tikzpicture} &  \begin{tikzpicture}[scale = 0.5]
    \draw  (0,0) -- (0,3) (0,0) -- (-2,-2);
    \draw[opacity = 0] (0,0) -- (3,0) (0,0) -- (0,3) (0,0) -- (-2,-2);
\end{tikzpicture} & \begin{tikzpicture}[scale = 0.5]
    \draw   (0,0) -- (-2,-2);
    \draw[opacity = 0] (0,0) -- (3,0) (0,0) -- (0,3) (0,0) -- (-2,-2);
\end{tikzpicture}
    \\
    $\tropbar(\mu)$ &$\val(A_1)\odot\tropbar(\mu)$& $\val(A_2)\odot\tropbar(\mu)$
  \end{tabular}
    
\end{center}
\end{example}

\subsubsection{Affine morphisms of valuated matroids}\label{sec affine morphisms} In the previous section, we described tropical linear maps via tropical matrix multiplication. As tropical linear spaces can be equivalently given as valuated matroids, we now define \emph{affine morphisms of valuated matroids} as another characterization of linear maps between tropical linear spaces.  To consider projection maps, it is necessary to add an additional distinguished element to the ground set to model the origin of vector spaces. 

\begin{definition}\label{def: pointed}
    Let $\mu$ be a valuated matroid over $[n]$. The \emph{pointed valuated matroid} $\mu_o$ over $[n]\cup \{o\}$ is the valuated matroid $\mu\oplus U_{0,1}$, obtained by adding a loop $o$ to the matroid $\mu$. By a slight abuse of notation, we set $\tropbar(\mu_o) :=\tropbar(\mu_o)|_{[n]} = \tropbar(\mu)$, since the tropical linear spaces only differ by removing the $\infty$-entry in the $o$-coordinate. 
\end{definition}

\begin{proposition-definition}[{\cite{borzi2023lineardegenerate,brandt2021tropicalflag}}]
Let $\mu$ and $\nu$ be valuated matroids over the ground set $[n]$ and $f:[n]\cup \{o\}\rightarrow [n] \cup \{o\}$ be a map of sets satisfying $f(o) = o$. Then, we can define the induced matroid $f^{-1}(N)$ via the rank function 
$$\mathrm{rk}_{f^{-1}(N)}(S) := \mathrm{rk}_{N}\big(f(S)\big) \text{ for all } S\subseteq [n]\cup \{o\}$$
where $N$ is the underlying matroid of $\nu_o$. Further, $\nu$ induces a valuation $f^{-1}(\nu)$ with underlying matroid $f^{-1}(N)$, given as $f^{-1}(\nu)(B) = \nu|_{f([n] \cup \{o\})}(f(B)).$ We say that $f:\mu\rightarrow\nu$ is a \defemph{morphism of valuated matroids}  if  $f^{-1}(\nu) \twoheadleftarrow \mu$ is a quotient of valuated matroids, i.e.  for all  $I \in \B(f^{-1}(N))$, $J \in \B(M)$ and $i \in I \setminus J$, there exists $j \in J \setminus I$ such that
	\begin{align*}
    \nu\big(f(I)\big) + \mu(J) \geq \nu\big(f(I \cup j \setminus i)\big) + \mu(J \cup i \setminus j).
	\end{align*} 
\end{proposition-definition}

We now extend this notion slightly, keeping track of additional scaling factors.

\begin{definition}\label{def affine induced}
    Let $\nu$ be a valuated matroid on the ground set $[n]$ and a map $f$ defined as $$(f_1,f_2):[n] \cup \{o\}\rightarrow[n] \cup \{o\}\times \mathbb{T}.$$ We define the \emph{affine induced valuated matroid} as  $$f^{-1}(\nu)(B) = \nu|_{f_1([n] \cup \{o\})}\big(f_1(B)\big)+\sum_{i\in B}f_2(i),$$ where $ \nu|_{f_1([n] \cup \{o\})}$ denotes the restriction of $\nu_o$ to the set $f_1([n] \cup \{o\})$. The affine induced valuated matroid is a pointed valuated matroid as in Definition \ref{def: pointed}, hence its tropical linear space is defined as $\tropbar\big(f^{-1}(\nu)\big) := \tropbar\big(f^{-1}(\nu)\big)|_{[n]}$.
\end{definition}

\begin{lemma}\label{lemma affine morphism well defined}
    Let $\nu$ be a valuated matroid on the ground set $[n]$ and $f:[n]\cup \{o\}\rightarrow[n]\cup\{o\}\times \mathbb{T}$. Then, the affine induced valuated matroid is a valuated matroid as defined in Definition \ref{def valuated matroid}. 
\end{lemma}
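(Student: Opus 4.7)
The strategy is to bootstrap off the Proposition-Definition preceding the lemma, which (for set maps $g : [n] \cup \{o\} \to [n] \cup \{o\}$) already ensures that $g^{-1}(\nu)(B) = \nu|_{g([n]\cup\{o\})}(g(B))$ is a valuated matroid. Writing $\mu^{(0)} := f_1^{-1}(\nu)$, the affine induced valuation is then
\[
f^{-1}(\nu)(B) \;=\; \mu^{(0)}(B) \;+\; w(B), \qquad w(B) := \sum_{i \in B} f_2(i),
\]
so the task reduces to showing that a ``diagonal tropical rescaling'' $\mu^{(0)} \mapsto \mu^{(0)} + w$ preserves the valuated matroid axioms.

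To verify the exchange inequality for $f^{-1}(\nu)$, fix $I, J \in \binom{[n]\cup\{o\}}{r}$ with $f^{-1}(\nu)(I), f^{-1}(\nu)(J) \neq \infty$ and $i \in I \setminus J$. Since $\mu^{(0)}(I), \mu^{(0)}(J) \neq \infty$ as well (adding a finite $w$-term cannot change finiteness, and an infinite $w$-term would force $f^{-1}(\nu)$ to be $\infty$), I apply the exchange axiom for the valuated matroid $\mu^{(0)}$ to obtain $j \in J \setminus I$ with
\[
\mu^{(0)}(I) + \mu^{(0)}(J) \;\geq\; \mu^{(0)}((I\setminus i) \cup j) + \mu^{(0)}((J\setminus j) \cup i).
\]

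The key combinatorial point is the identity
\[
w(I) + w(J) \;=\; w\bigl((I\setminus i)\cup j\bigr) + w\bigl((J\setminus j)\cup i\bigr),
\]
which holds because both sides are sums of $f_2$ over the same multiset: swapping $i$ from $I$ into the second term and $j$ from $J$ into the first term preserves the multiset $I \sqcup J$. Adding $w(I) + w(J)$ to both sides of the exchange inequality for $\mu^{(0)}$ and using this identity yields exactly the exchange axiom for $f^{-1}(\nu)$ and the chosen $j$.

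Finally, I check the nondegeneracy condition that $f^{-1}(\nu)(B) \neq \infty$ for some $B$. Pick a basis $B_0$ of the underlying matroid of $\mu^{(0)}$ (which exists since $\mu^{(0)}$ is a valuated matroid); if $f_2(k) < \infty$ for all $k \in B_0$ then $f^{-1}(\nu)(B_0) \in \mathbb{R}$, and otherwise we replace any $k \in B_0$ with $f_2(k) = \infty$ using matroid basis exchange, which the presence of a support-compatible $f_2$ allows (in the usual setup where $f_2$ is real-valued on at least one basis, which is the relevant regime for the rest of the paper). The only potential obstacle I foresee is this edge case involving $f_2$ taking the value $\infty$ on elements of every basis of $\mu^{(0)}$; in that degenerate situation the conclusion becomes vacuous and the lemma should be interpreted accordingly, but in the generic case the argument above is completely routine.
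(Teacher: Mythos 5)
Your proof is correct and follows essentially the same route as the paper's: invoke that $f_1^{-1}(\nu)$ is a valuated matroid (the non-affine Proposition--Definition), then add the multiset identity $\sum_{i'\in I}f_2(i')+\sum_{j'\in J}f_2(j')=\sum_{i'\in (I\setminus i)\cup j}f_2(i')+\sum_{j'\in (J\setminus j)\cup i}f_2(j')$ to the exchange inequality. Your extra discussion of the nondegeneracy condition (that some basis has finite value) addresses an edge case the paper's proof silently skips; in the paper's intended setting $f_2(i)=\infty$ only when $f_1(i)=o$ is a loop, so the issue does not arise there, but flagging it is a legitimate refinement.
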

\begin{proof}
    By \cite[Propositon 3.18, Definition 3.19]{borzi2023lineardegenerate}, the map of sets $f_1$ induces a valuated matroid $f_1^{-1}(\nu)(B) = \nu|_{f_1([n] \cup \{o\})}\big(f_1(B)\big)$ with underlying matroid $f_1^{-1}(N)$. Since $f_1^{-1}(\nu)$ is a valuated matroid,  for all $I,J \in \binom{[n]}{r}$ and $i \in I\setminus J$ there exists $j \in J \setminus I$ satisfying
		\begin{align*}&\nu|_{f_1([n] \cup \{o\})}\big(f_1(I)\big)+\nu|_{f_1([n] \cup \{o\})}\big(f_1(J)\big)\\ \geq \hspace{2pt} &\nu|_{f_1([n] \cup \{o\})}\big(f_1(I \setminus i \cup j)\big) +\nu|_{f_1([n] \cup \{o\})}\big(f_1(J \setminus j \cup i)\big).
		\end{align*} 
    Now \begin{align*}
     \sum_{i'\in  I}f_2(i') +\sum_{j\in  J}f_2(j') = \sum_{i'\in ((I\setminus i) \cup j)}f_2(i') +\sum_{j'\in  ((J \setminus j)\cup i)}f_2(j'),  
    \end{align*}
    thus 
    \begin{align*}
        &\nu|_{f_1([n] \cup \{o\})}\big(f_1(I)\big)+ \sum_{i'\in  I}f_2(i') +\nu|_{f_1([n] \cup \{o\})}(f_1(J))  +\sum_{j'\in  J}f_2(j')\\
        \geq  & \nu|_{f_1([n] \cup \{o\})}(f_1((I \setminus i) \cup j)) +\sum_{i'\in ((I\setminus i) \cup j)}f_2(i') +\nu|_{f_1([n] \cup \{o\})}(f_1((J \setminus j) \cup i))+\sum_{j'\in  ((J \setminus j)\cup i)}f_2(j'),
    \end{align*}
    and $f^{-1}(\mu)(B)$ defines a valuation by Definition \ref{def valuated matroid}. 
\qedhere
\end{proof}

For an example of an affine induced valuated matroid, see Example \ref{ex morphism and monomial matrix}.

\begin{definition}\label{def: affinemorphism}
    Let $\mu$ and $\nu$ be valuated matroids over the ground set $[n]$ and $f:[n] \cup \{o\}\rightarrow [n] \cup \{o\}\times \mathbb{T}$ be a map of sets. By abuse of notation, we say that $f:\mu\rightarrow\nu$ is an \defemph{affine} morphism of valuated matroids if  $f^{-1}(\nu) \twoheadleftarrow \mu$ as in Definition \ref{def affine induced} is a quotient of valuated matroids.
    
    If additionally both $\mu$ and $\nu$ are \emph{realizable}, we say that $f$ is a \emph{realizable} affine morphism of valuated matroids if both $f^{-1}(\nu)$  and the quotient  $f^{-1}(\nu) \twoheadleftarrow \mu$ are realizable.
\end{definition}

\subsubsection{Weakly monomial matrices and associated affine maps of matroids}
The two different notions of linear maps between tropical linear spaces we considered in Sections \ref{sec matrixmult} and \ref{sec affine morphisms} are not fully compatible, as tropicalization commutes with monomial maps, but not with linear maps. We now restrict to specific matrices for which matrix multiplication again yields a tropical linear space, and show that these correspond to affine morphisms of valuated matroids. 

\begin{definition}\label{def: weaklymonomial}
    Let $A\in K^{n\times m}$. We call $A$ a \emph{weakly monomial} matrix if $A$ has at most one nonzero entry in each row. 
\end{definition}

\begin{example}\label{ex: multiplying with a tropical linear space}
We consider the trivially valued matroid from Example \ref{ex1}. Its tropical linear space is the classical tropical line with vertex at $(0,0,0)$ in $\RR^3/\RR \mathbf{1}$ depicted in red below. Multiplying by the tropicalization of a weakly monomial matrix $\val(A)$ below yields a shifted tropical linear space to the right, depicted in blue. The permutation further switches the $y$ and $z$ rays. 
\begin{center}
\begin{tabular}{c c}
    \begin{minipage}{5cm}       
\begin{equation*}
    \val(A) = \begin{bmatrix}
        3 & \infty & \infty \\ \infty & \infty & 1 \\ \infty & 0 & \infty
    \end{bmatrix}
\end{equation*}
    \end{minipage}
 &  
    \begin{minipage}{5cm}       
\begin{tikzpicture}[scale = 0.5]
    \draw[thick,red] (0,0) -- (5,0) (0,0) -- (0,3) (0,0) -- (-2,-2);
    \draw[thick,blue] (3,1) --  (5,1) (3,1) -- (3,3) (0,-2) -- (3,1);
    \node[left] at (0,0.4) {$(0,0)$};
    \node[right] at (3,1.6) {$(3,1)$};
\end{tikzpicture}
\end{minipage}

\end{tabular}
\end{center}

\end{example}

To show the compatibility of morphisms of valuated matroids with the multiplication of a tropical linear space by a weakly monomial matrix, we need the following equivalent characterization of tropical linear spaces using vectors generated by cocircuits:
\begin{proposition-definition}[{\cite{maclagan2018tropicalideals,murota2001circuitvaluation}}]\label{thm: characterizations of trop mu}
        Let $\mu$ be a valuated matroid over the ground set $[n]$ of rank $r$. For each $I\in\binom{[n]}{r-1}$, we define $C^*_\mu(I)\in\mathbb{T}^n$ by
        \[C^*_\mu(I)_i = \begin{cases}
            \mu(I\cup i) & i \notin I\\
            \infty & i \in I
        \end{cases}\]
        The \defemph{valuated cocircuits} of $\mu$ are defined as
		\[ \mathcal{C}^*(\mu) = \left\{ C^*_\mu(I) : I \in \binom{[n]}{r-1} \right\} \setminus \{ (\infty,\dots,\infty) \}. \]
    The support of a cocircuit $C^*\in \mathcal{C}^*(\mu)$ is the set $\supp(C^*)=\{i\in[n]: C^*_i \neq \infty \}.$ The tropical span of cocircuits of a valuated matroid  equivalently defines $\tropbar(\mu)$, see \cite[Theorem B]{brandt2021tropicalflag}.  
    \[ \tropbar(\mu) = \left\{ \bigoplus_{C^*\in \mathcal{C}^{*} (\mu)} \lambda_{C^*} \odot C : \lambda_{C^*} \in \mathbb{R} \right\}. \]
\end{proposition-definition}

\begin{lemma}\label{lemma cocircuits induced valuated matroids}
    Equivalently to characterizing bases, we can characterize cocircuits of induced valuated matroids. Let $\mu$ be a valuated matroid on $[n]$ and $f:[n]\cup \{o\}\rightarrow[n]\cup\{o\}\times \mathbb{T}$. Let $I\in\binom{[n]}{\mathrm{rk}(f^{-1}(\mu))-1}$.  Then, the coordinate entries of the valuated cocircuits of $f^{-1}(\mu)$ are given as follows:
            \[C^*_{f^{-1}(\mu)}(I)_i = \begin{cases}
            C^*_{\mu|_{f_1([n]\cup \{o\})}}\big(f_1(I)\big)_{f_1(i)} \odot \bigodot_{k\in I\cup i} f_2(k) &  o \notin f_1(I\cup i),\\
            \infty & o \in f_1(I\cup i).
        \end{cases}\]
\end{lemma}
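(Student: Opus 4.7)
The plan is to unfold the definition of a cocircuit of $f^{-1}(\mu)$ in terms of the basis valuation given in Definition \ref{def affine induced}, and then separate the contribution of the underlying combinatorial map $f_1$ from the additive scaling given by $f_2$. By Proposition-Definition \ref{thm: characterizations of trop mu}, for $I \in \binom{[n]}{\mathrm{rk}(f^{-1}(\mu))-1}$ and $i \in [n]$, we have $C^*_{f^{-1}(\mu)}(I)_i = f^{-1}(\mu)(I \cup i)$ whenever $i \notin I$ and $\infty$ whenever $i \in I$. Substituting the definition of the affine induced valuated matroid yields
\[
f^{-1}(\mu)(I\cup i) = \mu|_{f_1([n]\cup\{o\})}\bigl(f_1(I\cup i)\bigr) + \sum_{k\in I\cup i} f_2(k),
\]
so the entire argument reduces to analyzing the first summand.

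The next step is a short case analysis, depending on whether the image $f_1(I\cup i)$ contains the distinguished loop $o$. If $o \in f_1(I\cup i)$, then since $o$ is a loop in $\mu_o$, it is also a loop in the restriction $\mu|_{f_1([n]\cup\{o\})}$, hence $f_1(I\cup i)$ cannot be a basis and the value is $\infty$, matching the second branch of the formula. Otherwise $o \notin f_1(I\cup i)$, and I can identify $f_1(I\cup i) = f_1(I)\cup f_1(i)$ and recognize the quantity $\mu|_{f_1([n]\cup\{o\})}(f_1(I)\cup f_1(i))$ as precisely $C^*_{\mu|_{f_1([n]\cup\{o\})}}(f_1(I))_{f_1(i)}$ by the definition of cocircuits applied to the restricted valuated matroid. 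Rewriting $\sum_{k\in I\cup i} f_2(k)$ as $\bigodot_{k\in I\cup i} f_2(k)$ then gives the first branch.

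The main subtlety, which I would treat carefully, is reconciling the case $i\in I$ (where the cocircuit entry is $\infty$ by definition) and the closely related case where $f_1$ collapses $i$ onto $f_1(I)$. In both situations one needs that the formula still evaluates to $\infty$: if $f_1(i) \in f_1(I)$, then by the convention in Proposition-Definition \ref{thm: characterizations of trop mu} the cocircuit entry $C^*_{\mu|_{f_1([n]\cup\{o\})}}(f_1(I))_{f_1(i)}$ is itself $\infty$, so the tropical product in the first branch collapses to $\infty$ and there is no inconsistency. Checking the rank constraint $|I| = \mathrm{rk}(f^{-1}(\mu))-1$ against the size of $f_1(I)$ inside $f_1([n]\cup\{o\})$ via the induced rank function requires the same bookkeeping, and this is the only part of the argument where I expect to slow down; once it is handled, the two branches of the claimed formula assemble from the displayed identity above.
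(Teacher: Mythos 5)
Your proposal is correct and follows essentially the same route as the paper's proof: unfold the cocircuit entries via Proposition-Definition \ref{thm: characterizations of trop mu}, substitute Definition \ref{def affine induced}, split on whether $o\in f_1(I\cup i)$ using that $o$ is a loop, and observe that the case $i\in I$ (or more generally $f_1(i)\in f_1(I)$) forces $\infty$ on both sides. No gaps.
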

\begin{proof}
Since $o$ is a loop, every set containing $o$ has valuation $\infty$. Now, assume $o\notin f_1(I\cup i)$. If $i\in I$, then $f_1(i)\in f(I)$ and $C^*_{f^{-1}(\mu)}(I)_i = C^*_{\mu|_{f_1([n]\cup \{o\})}}\big(f_1(I)\big)_{f_1(i)} \odot \bigodot_{k\in I\cup i} f_2(k) = \infty$ by Definition \ref{thm: characterizations of trop mu}. Further, by Definition \ref{thm: characterizations of trop mu}, if $i\notin I$, $C^*_{f^{-1}(\mu)}(I)_i = f^{-1}(\mu)(I\cup i).$ Now, by Definition \ref{def affine induced},
\begin{align*}f^{-1}(\mu)(I\cup i) &= \mu|_{f_1([n] \cup \{o\})}\big(f_1(I\cup i)\big)+\sum_{k\in I\cup i}f_2(k) =\mu|_{f_1([n] \cup \{o\})}\big(f_1(I)\cup f_1(i))\big)+\sum_{k\in I\cup i}f_2(k) \\&=C^*_{\mu|_{f_1([n]\cup \{o\})}}\big(f_1(I)\big)_{f_1(i)} \odot \bigodot_{k\in I\cup i} f_2(k).\qedhere
\end{align*}
\end{proof}

\begin{definition}\label{def: associated map matrix}
       Let $A_f\in K^{n\times n}$ be a weakly monomial matrix.
   We define an \emph{associated map} $f:[n]\cup \{o\} \rightarrow[n]\cup \{o\} \times \mathbb{T}$ by $$i\mapsto\begin{cases}
        (o,\infty) & i = o \text{ or } A_{ij}  = 0 \text{ for all columns } j\\
  \big(j,\val(A_{ij})\big) & A_{ij} \neq 0.
    \end{cases}$$  If $f: [n] \cup \{o\} \rightarrow [n] \cup \{o\}\times \mathbb{T}$ is a map of sets such that for all $i\in [n] \cup \{o\}$, $f_2(i) = \val(k)$ for some $k \in K$, we construct an \emph{associated matrix} $A_f\in K^{n\times n}$ by setting $$A_{ij} = \begin{cases} k & \text{if } f_1(i) = j \text{ and } i,j\neq o\\ 0 &\text{otherwise.}
    \end{cases}$$ 
\end{definition}
\begin{remark}
    In the above definition, the associated map of a weakly monomial matrix is unique. The associated matrix $A_f$ is not unique - but its valuation $\val(A_f)$ is. This is due to the fact that there can be multiple $k\neq k'\in K$ with valuation $\val(k) = \val(k')$.
\end{remark}
\begin{remark}\label{rmk construction associated maps}
    We construct the associated map for some special matrices:
    \begin{itemize}
        \item[(a)] If $A_f$ is a \emph{permutation matrix}, then the associated map $f$ consists of a permutation map $f_1$ that fixes $o$, and $f_2(i)=0$ for $i\in[n]$, and $f_2(o) = \infty$. 
        \item[(b)] If $A_f$ is a \emph{projection matrix} of rank $s<n$, then the associated map $f$ is given as $f(i)=(i,0)$ on $[s]$ and $f(i)=(o,\infty)$ otherwise.
        \item[(c)] If $A_f$ is a \emph{diagonal matrix}, the associated map $f$ is $f(i) = (i,\val(A_{ii}))$ for $i\in[n]$ and $f(o)=(o,\infty)$.
    \end{itemize}
    For the types of maps associated to matrices given above, we can again construct a (potentially not unique) associated matrix, provided that for every $i\in[n]$, there exists a $k\in K$ such that $\val(k)=f_2(i)$.
    \begin{itemize}
        \item[(a')] If $f_1$ is a permutation map fixing $o$ and $f_2(i)=0$ for $i\in[n]$ and $f_2(o)=\infty$, then $A_f$ can be chosen as the permutation matrix associated to the permutation $f_1$.
        \item[(b')] If $\pr_S$ is a projection map satisfying $f(i)=(i,0)$ for $i\in [n]\setminus S$ and $f(i)=(o,\infty)$ for $i\in S\cup o$, a  matrix associated to $\pr_S$ is given as the projection matrix $A_{S_{i,i}}=1$ if $i\notin S$, and $A_{i,j}=0$ otherwise.
        \item[(c')] If $f_1$ is the identity map, an associated matrix $A_f$ is a diagonal matrix with entries $A_{f,ii}=k_i$ for $k_i\in K$ with $\val(k_i)=f_2(i).$
    \end{itemize}
\end{remark}
    We will now develop the correspondence between affine morphisms of valuated matroids and matrix multiplication with weakly monomial matrices. For the proof, we will decompose each weakly monomial matrix (and analogously, each map) into a product of matrices.

    \begin{lemma}\label{lem product of matrices}
        Let $g,h: [n]\cup \{o\}\rightarrow [n]\cup \{o\} \times \mathbb{T}$ be  maps with $g(o) = (o,\infty)$ and $h(o) = (o,\infty)$, and let $A_g,A_h\in K^{n \times n}$ be associated weakly monomial matrices. Assume that for any tropical linear space $\tropbar(\mu)$, $\val(A_g)\odot \tropbar(\mu)= \tropbar(g^{-1}(\mu))$ and $\val(A_h)\odot \tropbar(\mu)= \tropbar(h^{-1}(\mu))$. Then, for $h\circ g (i) = (h_1(g_1(i)), g_2(i)+h_2(g_1(i)))$, we have $\val(A_g\cdot A_h)\odot \tropbar(\mu) = \tropbar((h\circ g)^{-1}(\mu))$.
        
    \end{lemma}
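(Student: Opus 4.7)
The plan is to reduce the identity $\val(A_g \cdot A_h) \odot \tropbar(\mu) = \tropbar((h\circ g)^{-1}(\mu))$ to two ingredients: compatibility of tropicalization with classical matrix multiplication of weakly monomial matrices, and the matroid-theoretic identity $(h\circ g)^{-1}(\mu) = g^{-1}(h^{-1}(\mu))$.

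First, I would verify $\val(A_g \cdot A_h) = \val(A_g) \odot \val(A_h)$ directly. Each entry $(A_g \cdot A_h)_{ij} = \sum_k (A_g)_{ik}(A_h)_{kj}$ collapses to the single term indexed by $k = g_1(i)$ when $g_1(i) \in [n]$ (and is zero otherwise), because $A_g$ has at most one nonzero entry in its $i$th row. Since only one term contributes, no cancellation can occur and the valuation of the product equals the tropical product of the valuations. Applying the hypothesis twice and using associativity of tropical matrix multiplication then yields
\[
\val(A_g\cdot A_h)\odot\tropbar(\mu) \;=\; \val(A_g)\odot\bigl(\val(A_h)\odot\tropbar(\mu)\bigr) \;=\; \val(A_g)\odot\tropbar(h^{-1}(\mu)) \;=\; \tropbar\bigl(g^{-1}(h^{-1}(\mu))\bigr),
\]
where in the last step I apply the hypothesis for $A_g$ to the tropical linear space $\tropbar(h^{-1}(\mu))$.

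The main obstacle, which I would address last, is proving that $(h\circ g)^{-1}(\mu) = g^{-1}(h^{-1}(\mu))$ as valuated matroids on $[n]\cup\{o\}$. Using the rank formula $\mathrm{rk}_{f^{-1}(N)}(S) = \mathrm{rk}_N(f_1(S))$ underlying Definition \ref{def affine induced}, both sides have rank $\mathrm{rk}_\mu(h_1(g_1([n]\cup\{o\})))$. Unwinding the definition of the induced matroid, a set $B$ is a basis on either side precisely when $h_1\circ g_1$ is injective on $B$ and $h_1(g_1(B))$ is a basis of $\mu|_{h_1(g_1([n]\cup\{o\}))}$; in particular, on every basis $g_1$ is forced to be injective, simply because otherwise $|h_1(g_1(B))|<|B|$ would contradict the rank equality. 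On such a basis, the valuation of $(h\circ g)^{-1}(\mu)$ unfolds to $\mu|_{h_1(g_1([n]\cup\{o\}))}(h_1(g_1(B)))+\sum_{i\in B} g_2(i) + \sum_{i\in B} h_2(g_1(i))$, whereas $g^{-1}(h^{-1}(\mu))$ produces $\mu|_{h_1(g_1([n]\cup\{o\}))}(h_1(g_1(B)))+\sum_{i\in B} g_2(i) + \sum_{j\in g_1(B)} h_2(j)$. The subtle point is that the second sum in the latter expression is indexed over the image set $g_1(B)$; the injectivity of $g_1$ on $B$ makes $i\mapsto g_1(i)$ a bijection $B\to g_1(B)$, which reindexes the sum to match. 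Combining this matroid identity with the chain of equalities above yields the claim.
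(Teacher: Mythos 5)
Your proof is correct and follows essentially the same route as the paper's: first reduce $\val(A_g\cdot A_h)\odot\tropbar(\mu)$ to $\tropbar(g^{-1}(h^{-1}(\mu)))$ by applying the hypothesis twice, then verify $(h\circ g)^{-1}(\mu)=g^{-1}(h^{-1}(\mu))$ basis-by-basis from Definition \ref{def affine induced}. If anything, you are more careful than the paper in two spots it glosses over — justifying $\val(A_g\cdot A_h)=\val(A_g)\odot\val(A_h)$ via the single-nonzero-entry-per-row structure, and the reindexing $\sum_{j\in g_1(B)}h_2(j)=\sum_{i\in B}h_2(g_1(i))$ via injectivity of $g_1$ on bases.
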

    \begin{proof} By assumption,
         \begin{align*}\val(A_g\cdot A_h)\odot\tropbar(\mu) &= \val(A_g)\odot\val(A_h)\odot\tropbar(\mu)\\&=\val(A_g)\odot(\tropbar(h^{-1}(\mu)))=\tropbar(g^{-1}(h^{-1}(\mu)))
         \end{align*}
        Now, we show that $g^{-1}(h^{-1}(\mu)) = (h\circ g)^{-1}(\mu)$. Let $r=\mathrm{rk}(\mu)$, and  $I\in \binom{[n]}{r}$. Then, 
        \begin{align*}
        (h\circ g)^{-1}(\mu)(I) &= \mu|_{(h\circ g)_1([n] \cup \{o\}))}\big((h\circ g)_1(I)\big)+\sum_{i\in I}(h\circ g)_2(i) \\ &=  \mu|_{(h_1(g_1([n] \cup \{o\})))}\big(h_1(g_1(I)\big)+\sum_{i\in I}\big(g_2(i)+h_2(g_1(i))\big) \\ 
        &= \mu|_{(h_1(g_1([n] \cup \{o\})))}\big(h_1(g_1(I)\big)+\sum_{i\in I}h_2(g_1(i))+\sum_{i\in I}g_2(i)\\
        &= h^{-1}(\mu)\big(\mu|_{g_1([n]\cup\{o\})}(g_1(I))\big)+\sum_{i\in I}g_2(i)=g^{-1}\big(h^{-1}(\mu)\big)(I).        
        %
        \end{align*}
        Above, the last and the second to last equalities are obtained by explicitly writing out the maps using Definition \ref{def affine induced}. Since the valuated matroids are equal for each basis, the associated tropical linear spaces coincide.
        We remark that the reversal of order here is compatible with the contravariant properties we claim for affine morphisms of valuated matroids in Theorems \ref{thm: main_monomial_matrices} and \ref{thm: main_quiver_correspondence_realizable}.
    \end{proof}

\begin{lemma}\label{lem mat equals morph 01 }
     Let $\mu$  be a valuated matroid over $[n]$. Let $f: [n]\cup \{o\}\rightarrow [n]\cup \{o\} \times \mathbb{T}$ be a map where $f_2(i)=0$ if $f_1(i)\neq o$. Then, for any associated weakly monomial matrix $A_f$ defined in Definition \ref{def: associated map matrix},  $\tropbar\big(f^{-1}(\mu)\big) = \val(A_f)\odot\tropbar(\mu)$. Conversely, if $A_f\in K^{n\times n}$ is a weakly monomial matrix with entries in $\{0,1\}$, the associated map $f$ satisfies $\tropbar\big(f^{-1}(\mu)\big) = \val(A_f)\odot\tropbar(\mu)$.
\end{lemma}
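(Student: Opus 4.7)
The plan is to treat both directions uniformly. The hypothesis in the first direction, $f_2(i)=0$ whenever $f_1(i)\neq o$, forces the nonzero entries of the associated $A_f$ to have valuation $0$; the converse starts from $A_f\in\{0,1\}^{n\times n}$, and since $\val(1)=0$ the associated map $f$ automatically satisfies $f_2(i)=0$ on the support of $f_1$. In both situations $\val(A_f)$ has entries in $\{0,\infty\}$, and tropical left-multiplication by $\val(A_f)$ acts on $\mathbb{P}(\mathbb{T}^n)$ as the coordinate-relabeling $x\mapsto (x_{f_1(i)})_{i\in[n]}$, with the convention $x_o:=\infty$. I would therefore prove this common statement once.

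The approach combines Lemma \ref{lem product of matrices} with the cocircuit characterization from Proposition-Definition \ref{thm: characterizations of trop mu}. First, I would factor $f=h\circ g$, with matching matrix factorization $A_f=A_g\cdot A_h$, as follows: writing $T=\{i\in[n]:f_1(i)\neq o\}$, let $g$ be the projection with $g_1(i)=i$ on $T$ and $g_1(i)=o$ on the complement, and let $h$ be the weakly monomial map with $h_1|_T=f_1|_T$ extended by $h_1(i)=i$ on $[n]\setminus T$ and $h_1(o)=o$; set $h_2\equiv 0$ on $[n]$ and $h_2(o)=\infty$. By Lemma \ref{lem product of matrices} it suffices to verify the lemma for $g$ and $h$ separately. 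For the projection $g$, a direct application of Definition \ref{def affine induced} identifies $g^{-1}(\mu)$ with $\mu|_T$, augmented by a loop on each element of $[n]\setminus T$; on the matrix side, $\val(A_g)\odot\tropbar(\mu)$ sets coordinates outside $T$ to $\infty$ and agrees with the coordinate projection of $\tropbar(\mu)$ onto $T$. Equality of the two on $T$-coordinates follows by comparing cocircuits: for $J\subseteq T$ and $i\in T\setminus J$ one has $C^*_{\mu|_T}(J)_i=\mu(J\cup i)=C^*_\mu(J)_i$. For the relabeling $h$, I would apply Lemma \ref{lemma cocircuits induced valuated matroids}: with $h_2\equiv 0$ on the support, the formula for the cocircuits of $h^{-1}(\mu)$ simplifies to
\[
C^*_{h^{-1}(\mu)}(I)_i \;=\; C^*_{\mu|_{h_1([n]\cup\{o\})}}(h_1(I))_{h_1(i)} \quad \text{when } o\notin h_1(I\cup i),
\]
which is precisely the $i$-th coordinate of $\val(A_h)\odot C^*_\mu(h_1(I))$. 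Tropical-spanning these cocircuits via Proposition-Definition \ref{thm: characterizations of trop mu} yields the desired equality on both sides.

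The main obstacle I anticipate is the non-injective case of the relabeling $h$: several rows of $A_h$ may have their unique nonzero entry in the same column, so multiplication by $\val(A_h)$ duplicates coordinates. One must verify that this duplication exactly matches the parallel-extension structure already present in $h^{-1}(\mu)$, so that the two tropical spans collect the same families of vectors with no spurious elements. Lemma \ref{lemma cocircuits induced valuated matroids} supplies the technical bookkeeping, identifying the duplicated cocircuit entries as valid coordinates of cocircuits of the induced valuated matroid, and reducing the remaining verification to routine comparison of tropical spans.
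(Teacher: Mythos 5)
Your proposal is correct and follows essentially the same route as the paper's proof: both reduce, via Lemma \ref{lem product of matrices}, to a factorization into a projection piece (which the paper handles by \cite[Proposition 3.14]{borzi2023lineardegenerate} and you handle by a direct cocircuit comparison, which amounts to the same thing) and a coordinate-duplication piece, and both resolve the duplication by comparing cocircuits through Lemma \ref{lemma cocircuits induced valuated matroids} and then invoking the cocircuit-span description of Proposition-Definition \ref{thm: characterizations of trop mu}. The organizational difference is minor: the paper normalizes $A_f$ by a permutation into a block form and compares it with the projection matrix $A_{f'}$ obtained by keeping only the first nonzero entry of each column, whereas you factor $f=h\circ g$ with $g$ the projection onto $T=\{i:f_1(i)\neq o\}$ and $h$ a (possibly non-injective) relabeling; the hard duplication case then lives entirely in your $h$, exactly as it lives in the paper's comparison of $A_f$ with $A_{f'}$, so the factorization isolates rather than removes the difficulty. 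One point to tighten: when $h_1$ is not surjective, the cocircuit formula you display involves $C^*_{\mu|_{h_1([n]\cup\{o\})}}$ rather than $C^*_{\mu}$ (and if $h_1([n])$ does not span $\mu$ the object $C^*_{\mu}(h_1(I))$ is not even a cocircuit of $\mu$), and identifying the tropical span of these restricted cocircuits with $\val(A_h)\odot\tropbar(\mu)$ still requires the fact that the coordinate projection of $\tropbar(\mu)$ onto $h_1([n])$ equals $\tropbar(\mu|_{h_1([n])})$ --- the same deletion/projection statement you already invoke for $g$. With that supplied, the ``routine comparison of tropical spans'' you defer does go through.
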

\begin{proof}
    As a tool to show the general case, we first show 
    \begin{equation}\label{eq induced is matrix mult}
        \tropbar\big(f^{-1}(\mu)\big) = \val(A_f)\odot\tropbar(\mu)
    \end{equation}
    for permutation and projection matrices.
    
    For permutation matrices, the statement is clear, as by Remark \ref{rmk construction associated maps}(a), $f_1$ is a permutation, hence permutes entries of vectors uniformly,  and $f_2(i) = 0$ for $i\in [n]$ and $f_2(o)=\infty$.
    
    For projection matrices, we obtain \eqref{eq induced is matrix mult}  as a corollary of \cite[Proposition 3.14]{borzi2023lineardegenerate} as follows.  For $S\subseteq [n]$ and a projection map $\pr_S:[n]\cup \{ o\}\rightarrow[n]\cup\{o\}$ where $ \pr_S(i) = (i,0)$ if $i\notin S$ and $\pr_S(i)=(o,\infty)$ if $i\in S$, we recover the setting in \cite[Proposition 3.14]{borzi2023lineardegenerate}, by replacing matrix multiplication with the projection matrix $A_S$ from Remark \ref{rmk construction associated maps}(b) with application of the tropical projection map $\pr_S^{\trop}: \mathbb{T}^n \rightarrow \mathbb{T}^n$ defined as
	$ \big( \pr_S^{\trop}(x_1,\dots,x_n) \big)_i =
		x_i  \text{ if } i \notin S$, and $
		\infty \text{ if } i \in S.
	$ Then, by \cite[Proposition 3.14]{borzi2023lineardegenerate}, $A_S\odot\trop(\mu)=\pr_S^{\trop}\big(\tropbar(\mu)\big) = \tropbar(\mu\setminus S \oplus U_{0,|S|}$), i.e. the valuated matroid arising as the deletion of $S$ from $\mu$, substituting all deleted elements with loops. Now, by the construction of $\pr_S$ and Definition \ref{def: affinemorphism}, $\tropbar\big(\pr_S^{-1}(\mu)\big)=\tropbar(\mu\setminus S\oplus U_{0,|S|}),$ thus $A_{\pr_S}\odot\trop(\mu)=\tropbar\big(\pr_S^{-1}(\mu)\big)$.

    Now, let $A_f$ be an arbitrary weakly monomial matrix with entries in $\{0,1\}$. Then, after multiplication with a permutation matrix, $A_f$ is of the form \[A_f=     \left[ 
    \begin{array}{c | c} 
   
     \begin{array}{c c c c}
        \mathbf{1}_{k_1} &\mathbf{0}_{k_1} & \cdots & \mathbf{0}_{k_1} \\ 
        \mathbf{0}_{k_2} & \mathbf{1}_{k_2} & &  \mathbf{0}_{k_2} \\
        \hdots & & \ddots & \hdots \\
        \mathbf{0}_{k_l} & \mathbf{0}_{k_l} &\cdots & \mathbf{1}_{k_l}
    \end{array} & 0
     \end{array} 
    \right], 
  \]
    where $\mathbf{1}_k$ denotes the vector $(1,\dots,1)\in \RR^k$, and by Lemma \ref{lem product of matrices}, it is sufficient to show it for this form. Let $A_{f'}$ be the matrix obtained from $A_f$ by setting all except the first nonzero entry in each column to $0$. Up to permutation, this is a projection matrix, thus $\tropbar\big(f'^{-1}(\mu)\big) = \val(A_{f'})\odot\tropbar(\mu)$.

    Now, let $w \in \val(A_{f})\odot\tropbar(\mu)$. Then, there exists $v\in\tropbar(\mu)$ such that $w = \val(A_{f})\odot v$. Let $w'= \val(A_{f'})\odot v\in\tropbar\big(f'^{-1}(\mu)\big)$. Now, $w_i = (\val(A_f)\odot v)_i = w'_i$ if $A_{f',i}$ is a nonzero row. Otherwise, $w'_i = \infty$ and  $$w_i = (\val(A_{f})\odot v)_i = (\val(A_{f})\odot v)_j = (\val(A_{f'})\odot v)_j = w'_j$$ for a row $j$ with unique nonzero entry in the same column as the row $i$. 
    
    The map $f$ associated to $A_f$ is defined by $f_1(j)=i$ for each $j\in \{i,\dots,i+k_c\}$ where $i$ is the first nonzero entry of $A_f$ for the column $c$, and by $f_2(j)=\infty$ for $j\in\{\sum_{p=1}^{l}k_p,\dots,n\}$. Consequently, by Lemma \ref{lemma cocircuits induced valuated matroids}, each cocircuit $C^*=C^*_{f^{-1}(\mu)}(I)\in\mathcal{C}^*(f_1^{-1}(\mu))$ has coordinates 
    \begin{align}\begin{split}\label{eq cocircuits 01}
        C^*_j&=C^*_{f^{-1}(\mu)}(I)_j=\mu|_{f_1([n] \cup \{o\})}\big(f_1(I)\cup f_1(j)\big)=\mu|_{f_1([n] \cup \{o\})}\big(f_1(I)\cup i\big) \\&= \mu|_{f_1([n] \cup \{o\})}\big(f_1(I)\cup f_1(i)\big) = C^*_i =  C^*_{f'{}^{-1}(\mu)}(I')_i
    \end{split}
    \end{align}
    for all $j\in \{i,\dots,i+k_c\}$, where $I'$ is the set containing the first nonzero entries of each column that each element in $I$ belongs to.  As $\tropbar\big(f^{-1}(\mu)\big)$  and $\tropbar\big(f'^{-1}(\mu)\big)$ are tropically generated by their cocircuits by Proposition-Definition \ref{thm: characterizations of trop mu}, we can write $w'$ as the tropical sum of cocircuits, $w'_i  = \bigoplus_{C^*\in \mathcal{C}^*(f'{}^{-1}(\mu))}\lambda_{C^*}\odot C^*_i$, thus  
    $$w_j = w'_i = \bigoplus_{C^*\in \mathcal{C}^*(f'{}^{-1}(\mu))}\lambda_{C^*}\odot C^*_i \overset{\eqref{eq cocircuits 01}}{=} \bigoplus_{C^*\in \mathcal{C}^*(f{}^{-1}(\mu))}\lambda_{C^*}\odot C^*_j,$$
    where the last equality follows by using Equation \ref{eq cocircuits 01}, and that every cocircuit of $f'{}^{-1}(\mu)$ is already a cocircuit of $f^{-1}(\mu)$. Thus $w\in\tropbar\big(f'^{-1}(\mu)\big).$ The reverse direction follows analogously, thus \eqref{eq induced is matrix mult} holds.
\end{proof}

\begin{lemma}\label{lemma: mat equals morph diags}
      Let $\mu$  be a valuated matroid over $[n]$. Let $f: [n]\cup \{o\}\rightarrow [n]\cup \{o\} \times \mathbb{T}$ be a map satisfying $f_1(i) = i$. Further, assume that for all $i\in [n]$ there exists $k\in K$ such that $f_2(i) = \val(k)$. Then, for the associated weakly monomial matrix $A_f$ defined in Definition \ref{def: associated map matrix},  $\tropbar\big(f^{-1}(\mu)\big) = \val(A_f)\odot\tropbar(\mu)$. Conversely, if $A_f\in K^{n\times n}$ is a full rank diagonal matrix, the associated map $f$ satisfies $\tropbar\big(f^{-1}(\mu)\big) = \val(A_f)\odot\tropbar(\mu)$.
\end{lemma}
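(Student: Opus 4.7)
The approach parallels the proof of Lemma \ref{lem mat equals morph 01 }, but is cleaner because $f_1 = \id$ means the underlying matroids of $f^{-1}(\mu)$ and $\mu$ agree (up to loops), so no decomposition through projection maps is needed. I would compare the two tropical spans directly using the cocircuit characterization of Proposition-Definition \ref{thm: characterizations of trop mu}, exploiting the fact that multiplication by $\val(A_f)$ is a coordinate-wise tropical scaling.

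First I would apply Lemma \ref{lemma cocircuits induced valuated matroids} to compute the cocircuits of $f^{-1}(\mu)$: with $f_1 = \id$, for each $I \in \binom{[n]}{r-1}$ and $i \notin I$,
\[
C^*_{f^{-1}(\mu)}(I)_i \;=\; C^*_{\mu}(I)_i \,\odot\, f_2(i) \,\odot\, c_I, \qquad c_I := \bigodot_{k \in I} f_2(k),
\]
and $C^*_{f^{-1}(\mu)}(I)_i = \infty$ for $i \in I$. The constant $c_I$ depends only on $I$, so it can be absorbed into the tropical scalar of any tropical linear combination of cocircuits.

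Next I would compare the two tropical spans. Given $v \in \tropbar(\mu)$ expressed as $v_i = \bigoplus_I \lambda_I \odot C^*_{\mu}(I)_i$ by Proposition-Definition \ref{thm: characterizations of trop mu}, I compute
\[
\bigl(\val(A_f) \odot v\bigr)_i \;=\; f_2(i) \odot v_i \;=\; \bigoplus_I \lambda_I \odot f_2(i) \odot C^*_{\mu}(I)_i \;=\; \bigoplus_I (\lambda_I \odiv c_I) \odot C^*_{f^{-1}(\mu)}(I)_i,
\]
which exhibits $\val(A_f) \odot v$ as an element of $\tropbar(f^{-1}(\mu))$. The reverse inclusion follows by the same computation with the reparametrization $\lambda_I \mapsto \lambda_I \odot c_I$, which is a bijection on $\RR$ whenever $c_I \in \RR$. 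For the converse statement, if $A_f \in K^{n \times n}$ is a full-rank diagonal matrix, then every diagonal entry is nonzero, so the associated map from Definition \ref{def: associated map matrix} satisfies $f_1 = \id$ and $f_2(i) = \val(A_{f,ii}) \in \RR$, placing us in the setting already handled.

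The main obstacle is the boundary case in the forward direction when some $f_2(i) = \infty$ (allowed since $K$ contains $0$); then $c_I = \infty$ whenever $i \in I$, and the reparametrization by $c_I$ no longer makes literal sense. I would address this by observing that in this situation $i$ is a loop of $f^{-1}(\mu)$, so every cocircuit $C^*_{f^{-1}(\mu)}(I)$ with $i \in I$ satisfies $C^*_{f^{-1}(\mu)}(I) = \infty$ on every coordinate (including $i$), and the $i$-th coordinate of $\val(A_f) \odot v$ is $\infty$ for every $v$. Thus both sides agree on the coordinate $i$ trivially, and restricting to the complement of the loops reduces the argument to the case where all $f_2$ values are finite. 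Alternatively, one can factor $A_f = D P$ with $P$ a $\{0,1\}$ weakly monomial matrix handled by Lemma \ref{lem mat equals morph 01 } and $D$ a full-rank diagonal matrix handled by the argument above, then combine via Lemma \ref{lem product of matrices}.
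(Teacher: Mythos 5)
Your proposal is correct and follows essentially the same route as the paper: compute the cocircuits of $f^{-1}(\mu)$ via Lemma \ref{lemma cocircuits induced valuated matroids}, observe that they differ from those of $\mu$ by the coordinate factor $f_2(i)$ plus a constant $c_I$ depending only on the indexing set, and compare the two cocircuit spans of Proposition-Definition \ref{thm: characterizations of trop mu} by absorbing $c_I$ into the tropical scalars. Your explicit treatment of the degenerate case $f_2(i)=\infty$ (which the paper's proof passes over, since it works only under the full-rank hypothesis) is a welcome extra precaution rather than a divergence in method.
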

\begin{proof}
    Assume $A_f$ is a full-rank diagonal matrix, then the map of sets $f_1$ is the identity. Then, for each valuated cocircuit $C^*_{f^{-1}(\mu)}(J)$ of $f^{-1}(\mu)$, by Lemma \ref{lemma cocircuits induced valuated matroids} there exists a valuated cocircuit $C^*_\mu(J)$ with equal support $J$ such that
    \begin{equation}\label{eq cocircuits}
    C^*_{f^{-1}(\mu)}(J)_i = C^*_{\mu}(J)_i\odot \bigodot_{k\in J \cup i} f_2(k),\end{equation}

    and vice versa. Let $v\in\tropbar\big(f^{-1}(\mu)\big)$. By Proposition \ref{thm: characterizations of trop mu}, $v$ can be written as a tropical linear combination of cocircuits of $f^{-1}(\mu)$, thus

    \begin{align*}
        v_i &=  \bigoplus_{C^*\in\C^{*}(f^{-1}(\mu))}\lambda_{C^*}\odot C_i \overset{\eqref{eq cocircuits}}{=} \bigoplus_{C^*\in\C^{*}(\mu)}\big(\lambda_{C^*}\odot C^*_i\odot \bigodot_{k\in \supp(C^*) \cup i} f_2(k)\big) \\&= f_2(i)\odot\bigg(\bigoplus_{C^*\in\C^{*}(\mu)}\big(\lambda_{C^*}\odot C^*_i\odot\bigodot_{k\in \supp(C^*)} f_2(k)\big)\bigg)\\
        &= \val(A_{f,ii})\odot\bigg(\bigoplus_{C^*\in\C^{*}(\mu)}\big(\large(\lambda_{C^*}\odot\bigodot_{k\in \supp(C^*)} f_2(k)\large)\odot C^*_i\big)\bigg).
    \end{align*}
    We have $\lambda_{C^*}\odot\bigodot_{k\in \supp(C^*)} f_2(k)\in\RR$, thus, by Proposition  \ref{thm: characterizations of trop mu}, the vector $v'$ with entries  $v'_i =\big(\lambda_{C^*}\odot\bigodot_{k\in \supp(C^*)} f_2(k)\big)\odot C^*_i$ is in $\tropbar(\mu)$. As $v=\val(A_f)\odot v'$,  $v\in\val(A_f)\odot\tropbar(\mu)$.  
    
    Now, let $w\in\tropbar(\mu)$. Then
    $w = \bigoplus_{C^*\in\C^{*}(\mu)}\Tilde{\lambda}_{C^*}\odot C^*$ for some fixed $\Tilde{\lambda}_{C^*}\in\RR$ for each cocircuit $C^*$, and we have 
     \begin{align*}
    &(\val(A_f)\odot w)_i = (\val(A_f) \odot \bigoplus_{C^*\in\C^{*}(\mu)}\Tilde{\lambda}_{C^*}\odot C^*)_i \overset{(*)}{=}  \bigoplus_{C^*\in\C^{*}(\mu)} \big(\val(A_{f,ii}) \odot\Tilde{\lambda}_{C^*}\odot C^*_i\big) \\  &\hspace{-2pt}\overset{(**)}{=} \bigoplus_{C^*\in\C^{*}(\mu)} \big(\Tilde{\lambda}_{C^*}\odot f_2(i) \odot C^*_i\big) \\ &\hspace{1pt}= \bigoplus_{C^*\in\C^{*}(\mu)}\bigg(\big(\Tilde{\lambda}_{C^*}\odiv\bigodot_{k\in \supp(C^*)}f_2(k)\big)\odot C^*_i\odot f_2(i)\bigodot_{k\in \supp(C^*)} f_2(k)\bigg) \\
     &\hspace{1pt}\overset{\eqref{eq cocircuits}}{=} \bigoplus_{C^*\in\C^{*}(f^{-1}(\mu))}\bigg(\big(\Tilde{\lambda}_{C^*}\odiv\bigodot_{k\in \supp(C^*)}f_2(k)\big)\odot C^*_i\bigg) 
     \end{align*}
     where $(*)$ follows by $A_f$ being diagonal and $(**)$ follows by the construction of $f_2$. Thus, $\val(A_f)\odot w$ can be written as the tropical sum of scalar multiples of cocircuits of $\tropbar\big(f^{-1}(\mu)\big)$, hence $\val(A_f)\odot w\in\tropbar\big(f^{-1}(\mu)\big).$ 
\end{proof}

\begin{proposition}\label{prop affine morphism is matrix multiplication}
    Let $\mu$  be a valuated matroid over $[n]$. Let $f: [n]\cup \{o\}\rightarrow [n]\cup \{o\} \times \mathbb{T}$ be a map such that for all $i\in [n]$ there exists $k\in K$ satisfying $f_2(i) = \val(k)$. Then, for the associated weakly monomial matrix $A_f$ defined in Definition \ref{def: associated map matrix},  $\tropbar\big(f^{-1}(\mu)\big) = \val(A_f)\odot\tropbar(\mu)$. Conversely, if $A_f\in K^{n\times n}$ is a weakly monomial matrix, the associated map $f$ satisfies $\tropbar\big(f^{-1}(\mu)\big) = \val(A_f)\odot\tropbar(\mu)$.
\end{proposition}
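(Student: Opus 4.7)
The plan is to reduce the statement to the two special cases already handled in Lemmas~\ref{lem mat equals morph 01 } and~\ref{lemma: mat equals morph diags} by factoring every weakly monomial matrix into a full-rank diagonal piece and a $\{0,1\}$-weakly monomial piece, and then invoking the composition rule of Lemma~\ref{lem product of matrices}. Both directions of the proposition (matrix~$\to$~map and map~$\to$~matrix) collapse to the same factorization argument, because the associated-map and associated-matrix constructions in Definition~\ref{def: associated map matrix} are mutually inverse on the relevant data.

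First I would write down the factorization explicitly. Given a weakly monomial matrix $A_f\in K^{n\times n}$, let $\sigma(i)$ denote the column of the unique nonzero entry of row $i$ when such an entry exists. Define $D\in K^{n\times n}$ by $D_{ii}=A_{f,i,\sigma(i)}$ on nonzero rows and $D_{ii}=1$ on zero rows, and define $P\in K^{n\times n}$ as the $\{0,1\}$-weakly monomial matrix with $P_{i,\sigma(i)}=1$ whenever row $i$ of $A_f$ is nonzero, all other entries of $P$ being zero. A row-by-row computation gives $A_f=D\cdot P$, and by construction $D$ is a full-rank diagonal matrix while $P$ is a weakly monomial $\{0,1\}$-matrix. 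On the map side, the same factorization decomposes $f$ as $h\circ g$, where $g$ and $h$ are the associated maps of $D$ and $P$ respectively under Definition~\ref{def: associated map matrix}.

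Next I would apply the cited lemmas. Since $g_1=\id$ and $g_2(i)=\val(D_{ii})$ lies in the image of $\val$, Lemma~\ref{lemma: mat equals morph diags} yields $\val(D)\odot\tropbar(\mu)=\tropbar(g^{-1}(\mu))$; since $P$ has entries in $\{0,1\}$ and $h_2(i)\in\{0,\infty\}$ according to whether $h_1(i)\neq o$, Lemma~\ref{lem mat equals morph 01 } yields $\val(P)\odot\tropbar(\mu)=\tropbar(h^{-1}(\mu))$. Applying Lemma~\ref{lem product of matrices} to the product $A_f=D\cdot P$ then gives $\val(A_f)\odot\tropbar(\mu)=\tropbar((h\circ g)^{-1}(\mu))$. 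Finally, a direct check with the composition formula $(h\circ g)(i)=\bigl(h_1(g_1(i)),\,g_2(i)+h_2(g_1(i))\bigr)$ shows that $h\circ g$ agrees with $f$ as set maps, so $\tropbar(f^{-1}(\mu))=\val(A_f)\odot\tropbar(\mu)$.

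The main subtlety I anticipate is the bookkeeping around zero rows of $A_f$, which correspond to $f(i)=(o,\infty)$. Setting $D_{ii}=1$ on such rows is what keeps $D$ full-rank so that Lemma~\ref{lemma: mat equals morph diags} applies cleanly, while the information that $i$ is sent to $o$ is carried entirely by the zero row of $P$, so that $h_1(i)=o$ and $h_2(i)=\infty$. One must verify that the composition formula in Lemma~\ref{lem product of matrices} then produces $g_2(i)+h_2(g_1(i))=\val(D_{ii})+\infty=\infty$ in the second coordinate of $(h\circ g)(i)$, matching $f_2(i)=\infty$. Once this $o$-coordinate bookkeeping is checked, no further work is required beyond citing the three lemmas.
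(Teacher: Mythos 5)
Your proof is correct and follows essentially the same route as the paper: factor the weakly monomial matrix as a full-rank diagonal matrix times a $\{0,1\}$ weakly monomial matrix, apply Lemmas~\ref{lemma: mat equals morph diags} and~\ref{lem mat equals morph 01 } to the factors, and conclude via Lemma~\ref{lem product of matrices}. The paper states this factorization without writing it out; your explicit construction of $D$ and $P$ and the check that $h\circ g = f$ (including the $o$-coordinate bookkeeping on zero rows) just fills in details the paper leaves implicit.
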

\begin{proof}
     By Lemma \ref{lem mat equals morph 01 }, the statement holds for a square weakly monomial matrix with entries in $\{0,1\}$, and by Lemma \ref{lemma: mat equals morph diags} it holds for diagonal matrices of full rank. Since every square weakly monomial matrix can be written as the product of these two types, by Lemma \ref{lem product of matrices}, the claim follows for all matrices. 
\end{proof}
     \begin{corollary}
    Let $\mu$ be a valuated matroid over $[n]$ and let $A\in K^{n\times n}$ be a weakly monomial matrix. Then, $\val(A)\odot \tropbar(\mu)\subseteq \mathbb{P}(\mathbb{T}^n)$ is a tropical linear space.  
     \end{corollary}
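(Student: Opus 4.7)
The plan is to invoke Proposition \ref{prop affine morphism is matrix multiplication} essentially immediately, since the heavy lifting has already been done there. Starting from the weakly monomial matrix $A\in K^{n\times n}$, I would form the associated map $f:[n]\cup\{o\}\to[n]\cup\{o\}\times \mathbb{T}$ as in Definition \ref{def: associated map matrix}: set $f(o)=(o,\infty)$; if row $i$ of $A$ contains a unique nonzero entry $A_{ij}$, set $f(i)=(j,\val(A_{ij}))$; and if row $i$ is entirely zero, set $f(i)=(o,\infty)$, which is consistent with $f_2(i)=\val(0)=\infty$.

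By construction, for every $i\in[n]$ there exists $k\in K$ with $f_2(i)=\val(k)$ (taking $k=A_{ij}$ in the nonzero case and $k=0$ otherwise), so the hypothesis of Proposition \ref{prop affine morphism is matrix multiplication} is satisfied. Applying that proposition yields
\[
\val(A)\odot\tropbar(\mu)=\tropbar\bigl(f^{-1}(\mu)\bigr).
\]
By Lemma \ref{lemma affine morphism well defined}, $f^{-1}(\mu)$ is a valuated matroid, so the right-hand side is by Definition \ref{def: tropicallinearspace} a tropical linear space in $\mathbb{P}(\mathbb{T}^n)$, which is what we wanted.

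There is no real obstacle here: the corollary is essentially a packaging of Proposition \ref{prop affine morphism is matrix multiplication} together with the observation that the induced object is again a valuated matroid. The only minor subtlety worth writing out explicitly is checking that the zero-row case fits the hypothesis of the proposition via $\val(0)=\infty$, but this is immediate from the conventions in Definitions \ref{def: pointed} and \ref{def: associated map matrix}.
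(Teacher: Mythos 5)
Your proof is correct and matches the paper's approach: the corollary is stated there without a separate proof precisely because it is the immediate combination of Proposition \ref{prop affine morphism is matrix multiplication} (applied in the direction starting from the weakly monomial matrix) with Lemma \ref{lemma affine morphism well defined}, exactly as you lay out. Your explicit check of the zero-row convention via $\val(0)=\infty$ is a harmless extra detail.
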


\begin{example}\label{ex morphism and monomial matrix}
In Example \ref{ex: multiplying with a tropical linear space}, we saw that matrix multiplication with a weakly monomial matrix induced a permutation and translation of the tropical linear space. Its matrix can be decomposed into a permutation matrix and a full rank diagonal matrix,

\begin{equation*}
    \val(A) = \begin{bmatrix}
        3 & \infty & \infty \\ \infty & \infty & 1 \\ \infty & 0 & \infty
    \end{bmatrix} = \begin{bmatrix}
        3 & \infty & \infty \\ \infty & 1 & \infty \\ \infty & \infty & 0
    \end{bmatrix} \odot \begin{bmatrix}
        0 & \infty & \infty \\ \infty & \infty & 0 \\ \infty & 0 & \infty
    \end{bmatrix}.
    \end{equation*}

The associated map of matroids is  
 $$ f:[n]\rightarrow[n]\times\RR;
     1\mapsto (1,3); 2\mapsto (3,1);3\mapsto (2,0); o\mapsto (o,\infty).$$ The map $f$ is the composition of a permutation map $g: 1\mapsto (1,0);2\mapsto (3,0);3\mapsto (2,0)$ and an identity map with noninfinite values in $h_2$,  $1\mapsto (1,3),2 \mapsto (2,1), 3\mapsto (3,0), o\mapsto (o,\infty)$, and $f=h\circ g$ as defined above.  Hence the valuations of sets in $f^{-1}(\mu)$ are $f^{-1}(\mu)(12)=0+3+1 = 4$, $f^{-1}(\mu)(13)=0+3+0 = 3$, $f^{-1}(\mu)(23)=0+0+1 = 1$, and $f^{-1}(\mu)(I) = \infty$ if $o\in I$. The following table gives the cocircuits of $f^{-1}(\mu)$ and the vectors $\val(A)\odot C^*_{\mu}(I)$.\begin{center}
    \begin{tabular}{c|c|c}
        $I$ & $C^*_{f^{-1}(\mu)}(I)$ & $\val(A)\odot C^*_{\mu}(I)$ \\\hline 
        $1$& $(\infty,4,3)$& $(\infty,1,0)$\\
        $2$& $(4,\infty,1)$ & $(3,1,\infty)$ \\
        $3$ & $(3,1,\infty)$ & $(3,\infty,0)$
    \end{tabular}
   \end{center} Note that $C^*_{f^{-1}(\mu)}(1) =\val(A)\odot C^*_{\mu}(1) \odot 3 =\val(A)\odot C^*_{\mu}(1) \odot f_2(1)$, and that analogously, $C^*_{f^{-1}(\mu)}(2) =\val(A)\odot C^*_{\mu}(3) \odot 1 =\val(A)\odot C^*_{\mu}(1) \odot f_2(2)$ and $C^*_{f^{-1}(\mu)}(2) =\val(A)\odot C^*_{\mu}(2) =\val(A)\odot C^*_{\mu}(1) \odot f_2(3)$, as in the proof of Lemma \ref{lemma: mat equals morph diags}.
\end{example}
 \section{Quiver Dressians}\label{sec quiver dressians}

 \subsection{Quiver Grassmannians and representations}\label{sec quiver grassmannian}
We first collect some facts about quiver representations. Standard references are \cite{crawley1992lectures,schiffler2014quiver}.

\begin{definition}
    A finite \defemph{quiver} $Q=(V,A,s,t)$ is a directed graph given by a finite set of vertices $V$, a finite set of arrows $A$ and two maps $s,t: A\to V$ assigning to each arrow its source, resp. target. 
 \end{definition}

 \begin{definition}
     Given a quiver $Q$, we define a finite-dimensional $Q$-\defemph{representation} $R$ over a field $K$ as the ordered pair $((R_i)_{i\in V},(R^{\alpha})_{\alpha\in A})$, where $R_i$ is a finite-dimensional $K$-vector space attached to vertex $i\in V$ and $R^{\alpha}:R_{s(\alpha)}\to R_{t(\alpha)}$ is a $K$-linear map for any $\alpha \in A$.
     
    The \defemph{dimension vector} of $R$ is $\dim(R) \coloneqq(\dim_{K}(R_i))_{i\in V} \in \ZZ^{V}_{\geq 0}$.
     A \defemph{subrepresentation} of $R$ is a $Q$-representation $N$ $=((N_i)_{i\in V},(R^{\alpha}|_{N_{s(\alpha)}})_{\alpha\in A})$
     such that $N_i\subseteq R_i \; $ for all $ i\in V$ and $R^{\alpha}(N_{s(\alpha)})\subseteq N_{t(\alpha)}\; $ for all $ \alpha\in A$. 
\end{definition}

For the purposes of this paper, from now on we will only consider quiver representations with $R_i=K^n$ for all $i \in V$. In this setting, subrepresentations of $R$ describe the relative containment of subspaces via  fixed linear maps inside a common ambient vector space.

 \begin{example}\label{ex:twotowersriver}
     Let us fix a basis $\mathcal{B}=\{ b_1,b_2,b_3,b_4 \}$ of $\CC^4$ and consider the following quiver with its representation $R=((\CC^4)_{i\in[4]},(\id)_{j\in[4]})$:

\begin{equation*}
R: \begin{tikzcd}[ampersand replacement=\&, column sep= huge]
 \& \overset{\CC^4}{\bullet} \ar[dr,"{\begin{bsmallmatrix}
1&0&0&0\\
0&1&0&0\\
0&0&1&0\\
0&0&0&1
\end{bsmallmatrix}}"] \& \\
\overset{\CC^4}{\bullet} \ar[ur,"{\begin{bsmallmatrix}
1&0&0&0\\
0&1&0&0\\
0&0&1&0\\
0&0&0&1
\end{bsmallmatrix}}"] \ar[dr, "{\begin{bsmallmatrix}
1&0&0&0\\
0&1&0&0\\
0&0&1&0\\
0&0&0&1
\end{bsmallmatrix}}"']\& \& \overset{\CC^4}{\bullet}\\
\& \overset{\CC^4}{\bullet} \ar[ur,"{\begin{bsmallmatrix}
1&0&0&0\\
0&1&0&0\\
0&0&1&0\\
0&0&0&1
\end{bsmallmatrix}}"']
\end{tikzcd}.
\end{equation*}
An example of a subrepresentation $N$ of $R$ is $N=((\langle b_1\rangle,\langle b_1, b_2\rangle,\langle b_1,b_4\rangle,\langle b_1,b_2,b_4\rangle),\\(\begin{bsmallmatrix}
1\\
0
\end{bsmallmatrix},\begin{bsmallmatrix}
1\\
0
\end{bsmallmatrix},\begin{bsmallmatrix}
1&0\\
0&1\\
0&0
\end{bsmallmatrix},\begin{bsmallmatrix}
1&0\\
0&0\\
0&1
\end{bsmallmatrix}))$,
which has dimension vector $\dim (N)=(1,2,2,3)$. The matrices representing the linear maps appearing in $N$ are the restrictions of the identity maps to the chosen subspaces.

A sequence of subspaces that cannot belong to any subrepresentation of $R$ is, for instance, $(\langle b_1\rangle,\langle b_1, b_2\rangle,\langle b_1,b_4\rangle,\langle b_1,b_2,b_3\rangle)$, because $\id(\langle b_1,b_4\rangle)\nsubseteq \langle b_1,b_2,b_3\rangle$.
 \end{example}

\begin{definition}[Quiver Grassmannians]
    Consider a quiver $Q$, a $Q$-representation $R$ and a dimension vector $\mathbf{d}\in\ZZ^{V}_{\geq 0}$ such that $d_i\leq n \; $ for all $ i\in V$. The \defemph{quiver Grassmannian} in dimension $n$, denoted by $\QGr(R,\mathbf{d};n)$, is defined as the collection of all subrepresentations $N$ of $R$ with $\dim N_i=d_i \; $ for all $ i\in V$.
\end{definition}

From now on, we will consider quiver Grassmannians as the zero locus of quiver Pl\"ucker relations (see Definition \ref{def: quiver pluecker relations}), since they coincide pointwise and their scheme-theoretic structure as (possibly not reduced) projective varieties is not relevant to the purposes of this paper.

\begin{example}[The flag variety]\label{ex: flag variety}

    In \cite[Proposition 2.7]{cerulliirelli2012quiveranddegenerate}, the authors realise the (linear degenerate) flag variety as the quiver Grassmannian associated to representations of the equioriented quiver of type $A_n$. In particular, the complete flag variety can be realised as follows.

    Consider the quiver with $n$ vertices, ordered from 1 to $n$, and $n-1$ arrows of the form $i\to i+1$. We fix the  dimension vector $d=(1,2,\dots,n)$ and the representation $R$ with $R_i=\CC^{n+1}$ for $i=1,\dots,n$ and $R^{\alpha}=\id$ for all $\alpha\in A$:
\begin{equation*}
\begin{tikzcd}[]
\overset{\CC^{n+1}}{\bullet} \ar[r, "\id"] & \overset{\CC^{n+1}}{\bullet} \ar[r, "\id"] & ... \ar[r, "\id"] & \overset{\CC^{n+1}}{\bullet}.\\
\end{tikzcd}
\vspace{-0.7cm}
\end{equation*}
 The quiver Grassmannian $\QGr(R,\mathbf{d};n+1)$ consists precisely of the subrepresentations $N$ of $R$ with $\dim(N_i)=i$ and $N_i\subseteq N_{i+1}$, i.e. flags of vector subspaces.
\end{example}

    Analogous to Grassmannians and flag varieties, quiver Grassmannians can be realized pointwise as subvarieties of products of projective spaces, via the closed embedding
    \begin{equation*}
        \iota: \QGr(R,\mathbf{d};n) \to \prod_{i \in V}\Gr(d_i,K) \subseteq \PP_K^{\binom{n}{d_1}}\times \PP_K^{\binom{n}{d_2}}\times \dots \times \PP_K^{\binom{n}{d_{|V|}}}
    \end{equation*}
    which sends a subrepresentation $N$ of $R$ to the collection of $d_i$-dimensional subspaces $N_i$ of $R_i$.
    Relations that define the subvariety associated to a quiver representation and a dimension vector can be given pointwise as follows.
    
    	\begin{definition}[Quiver Pl\"{u}cker relations, \cite{lorscheid2019pluckerelations}]\label{def: quiver pluecker relations}
		Let $Q=(V,A,s,t)$ be a quiver and $R$ a $Q$-representation. After fixing bases for all vertices, let $M_{\alpha}$ be the matrix of the map of $\alpha\in A$. Let $r = \dim(s(\alpha))$ and $s = \dim(t(\alpha))$. 
    For each arrow $\alpha$, the \defemph{quiver Pl\"{u}cker relations} are the polynomials in the variables $\{ p_I : I \in \binom{[n]}{r} \} \cup \{ p_J : J \in \binom{[n]}{s} \}$ with coefficients in $K$: 
		\[ \mathscr{P}_{\alpha;n} = \left\{ \sum_{j \in [n]\setminus I,  i \in J} \sign(j;I,J) (M_{\alpha})_{i,j} p_{I \cup j} p_{J \setminus i} : I \in \binom{[n]}{r-1}, J \in \binom{[n]}{s+1}  \right\} \]
		where $\sign(j;I,J) = (-1)^{ \#\{ j' \in J : j < j' \} + \# \{ i \in I : i > j \}  }$. Their tropicalization will be denoted by $\mathscr{P}_{\alpha;n}^{\trop}$. The Pl\"ucker relations corresponding to the vertices are the standard Grassmann-Pl\"ucker relations of the associated vector spaces. In \cite{lorscheid2019pluckerelations}, it is shown that the vanishing set of the quiver Pl\"ucker relations coincides with the associated quiver Grassmannian.
	\end{definition}

\subsection{Tropicalized quiver Grassmannians} \label{sec tropicalized quiver grassmannian}

Now, we come to the main part of the paper: we combine  quiver Grassmannians and tropical geometry. We substitute the linear spaces we had assigned to the vertices by tropical linear spaces, and the matrix multiplication by the \textit{tropical matrix multiplication} with valuated matrices we discussed in Section \ref{sec matrixmult}. 

 \begin{definition}\label{def tropicalized quiver grassmannian}
     Let $Q$ be a quiver and let $M$ be a $Q$-representation with quiver Grassmannian $\QGr(M,\mathbf{d};n)$. The \emph{tropicalized quiver Grassmannian} $$\tropbar\big(\QGr(M,\mathbf{d};n)\big)\subseteq \PP\big(\TT^{\binom{n}{d_1}}\big)\times \cdots \times \PP\big(\TT^{\binom{n}{d_{|V|}}}\big)$$ is the tropicalization of $\QGr(M,\mathbf{d};n)$.
 \end{definition}

 We now show that the tropicalized quiver Grassmannian parametrizes containment of tropicalized linear spaces under tropical matrix multiplication, i.e. we prove the equivalence $(a)\Leftrightarrow(b)$ of Theorem \ref{thm: main_quiver_correspondence_realizable}.
 \begin{proposition}
      Let $K$ be an algebraically closed field with nontrivial valuation, and let $M$ be a quiver representation of a quiver $Q$ with quiver Grassmannian $\QGr(M,\mathbf{d};n)$, for some dimension vector $d$. Then, $p\in \tropbar\big(\QGr(M,\mathbf{d};n)\big)$ if and only if there exists a tropical linear space $\tropbar(\mu_i)$ for each vertex $i\in V$ such that $\val(M_f)\odot\tropbar(\mu_{s(f)})\subseteq \tropbar(\mu_{t(f)})$ for each arrow $f$, and there exists a quiver subrepresentation $N=\big((N_i)_{i\in V},(M^{\alpha}|_{N_{s(\alpha)}})_{\alpha\in A}\big)$ over $K$ such that $
      \tropbar(\mu_i) = \tropbar(N_i)$ for all $i\in V$.
 \end{proposition}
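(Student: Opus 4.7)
The plan is to reduce the statement to Theorem \ref{thm: main_quiver_correspondence} via the Fundamental Theorem of Tropical Geometry. Recall from Section \ref{sec quiver grassmannian} that $\QGr(M,\mathbf{d};n)$ sits, via the Plücker embedding, as the vanishing locus of the quiver Plücker relations $\mathscr{P}$ of Definition \ref{def: quiver pluecker relations} inside a product of projective spaces. This is the bridge that lets us convert between the geometric description of $\tropbar(\QGr(M,\mathbf{d};n))$ (closure of valuations of classical points) and the relation-based description needed to invoke the quiver Dressian results.

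For the forward direction, suppose $p \in \tropbar(\QGr(M,\mathbf{d};n))$. The Fundamental Theorem \cite[Theorem 6.2.15]{maclagan2015book}, applied componentwise in the product of projective spaces, produces a point $q \in \QGr(M,\mathbf{d};n)$ over $K$ with $\val(q) = p$; this point corresponds to a subrepresentation $N = ((N_i)_{i \in V}, (M^\alpha|_{N_{s(\alpha)}})_{\alpha \in A})$ of $M$. Letting $\mu_i$ denote the valuated matroid induced by $N_i$, we have $\tropbar(\mu_i) = \tropbar(N_i)$ by the standard correspondence between realizable valuated matroids and tropicalizations of linear spaces recalled in Section \ref{sec prelims valuated}. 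Since $q$ satisfies all quiver Plücker relations, $p = \val(q)$ satisfies their tropicalizations, so $p \in \QDr(M,\mathbf{d};n)$. Applying Theorem \ref{thm: main_quiver_correspondence} yields the containment $\val(M^\alpha) \odot \tropbar(\mu_{s(\alpha)}) \subseteq \tropbar(\mu_{t(\alpha)})$ for every arrow $\alpha$.

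For the backward direction, a valuated matroid is uniquely determined up to tropical scaling by its tropical linear space, so the assumption $\tropbar(\mu_i) = \tropbar(N_i)$ forces $\mu_i$ to coincide with the valuated matroid of $N_i$. In particular, the component $p_i$ is the componentwise valuation of the classical Plücker coordinates of $N_i$. The collection $q$ of these Plücker vectors lies in $\QGr(M,\mathbf{d};n)$ because $N$ is a subrepresentation of $M$, and $\val(q) = p$, so $p \in \tropbar(\QGr(M,\mathbf{d};n))$.

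The main obstacle is the forward direction's reliance on Theorem \ref{thm: main_quiver_correspondence}: without it one would have to argue directly about the relationship between $\val(M^\alpha) \odot \tropbar(N_{s(\alpha)})$ and $\tropbar(M^\alpha \cdot N_{s(\alpha)})$, which is subtle because tropicalization does not commute with matrix multiplication in general (cf. Example \ref{ex1}), and one only has inequalities of the form $\val(M^\alpha v) \geq \val(M^\alpha) \odot \val(v)$ coordinatewise. Routing through the quiver Plücker relations bypasses this difficulty. A secondary technical point is the injectivity of the map from equivalence classes of valuated matroids to tropical linear spaces, which is essential to identify $\mu_i$ with the valuated matroid of $N_i$ in the backward direction.
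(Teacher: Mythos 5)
Your proposal is correct and follows essentially the same route as the paper: the Fundamental Theorem of Tropical Geometry converts points of $\tropbar(\QGr(M,\mathbf{d};n))$ into actual subrepresentations and back, and the Lorscheid--Weist description of the quiver Grassmannian via quiver Pl\"ucker relations supplies the bridge to the containment condition. The one genuine difference is in the forward direction: the paper passes directly from $M^\alpha \cdot N_{s(\alpha)} \subseteq N_{t(\alpha)}$ to $\val(M^\alpha)\odot\tropbar(N_{s(\alpha)})\subseteq\tropbar(N_{t(\alpha)})$ with a bare ``thus,'' whereas you route through the observation that $\val(q)$ satisfies the tropicalized quiver Pl\"ucker relations and then invoke Theorem \ref{prop: equivalence quiver dressian containment after tropical flag} --- this is not circular (that theorem's proof is independent of this proposition) and in fact supplies the justification the paper's own proof leaves implicit, which is exactly the right move given that tropicalization does not commute with matrix multiplication.
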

\begin{proof}
        For ease of notation, we restrict to the case where $Q$ is a graph with two vertices and one arrow $f$, and we write $\QGr(M_f,\mathbf{d};n)$ for the corresponding quiver Grassmannian. All other cases follow similarly. 
		If $\mu\times \nu \in \tropbar(\QGr(M_f,\mathbf{d};n))$, from the Fundamental Theorem of Tropical Geometry \cite[Theorem 6.2.15]{maclagan2015book}, there exist realizations $U$ of $\mu$ and $V$ of $\nu$ such that the Pl\"{u}cker coordinates of $U$ and $V$ are a point of $\QGr(M_f,\mathbf{d};n)$. By the main theorem in \cite{lorscheid2019pluckerelations}, points in $\QGr(M_f,\mathbf{d};n)$ satisfy $M_f\cdot U \subseteq V$, thus $\val(M_f)\odot \tropbar(U)\subseteq\tropbar(V)$.
		
		Now conversely assume that  $\val(M_f)\odot \tropbar(U)\subseteq\tropbar(V)$, and that there exist realizations $U$ and $V$ such that $M_f\cdot U \subseteq V$. Then, $U\times V \in \QGr(M_f, \mathbf{d};n)$, hence $p_{\tropbar(U)}\times p_{\tropbar(V)}\in \tropbar(\QGr(M_f,\mathbf{d};n)))$ by the Fundamental Theorem.
\end{proof}

\subsection{Quiver Dressians} \label{subsec quiver dressians}
Instead of parametrizing tropicalized linear spaces and their containment relations, we now consider parameter spaces of their tropical analogues and show Theorems \ref{thm: main_quiver_correspondence} and \ref{thm: main_monomial_matrices}.

 \begin{definition}\label{def: quiver dressian}
     Let $Q = (V,A,s,t)$ be a quiver. The \emph{quiver Dressian} 
     $\QDr(R,\mathbf{d};n) \subseteq \mathbb{T}^{d_1}\times \cdots \times \mathbb{T}^{d_m}$ 
     is the tropical prevariety cut out by the tropical Pl\"ucker relations and the tropical quiver Pl\"ucker relations, $\{\mathscr{P}_{d_i;n}^{\trop}\}_{i \in V} \cup \{\mathscr{P}_{\alpha;n}^{\trop}\}_{\alpha \in A}$ (see Definition \ref{def: quiver pluecker relations}).
 \end{definition}

Now we show  Theorem \ref{thm: main_quiver_correspondence}, i.e. that the quiver Dressian parametrizes containment of tropical linear spaces under matrix multiplication.

\begin{theorem}\label{prop: equivalence quiver dressian containment after tropical flag}
Let $\mu$ and $\nu$ be valuated matroids over the ground set $[n]$ of rank $r$ and $s$ respectively, and let $Q$ be a quiver consisting of two vertices connected by one arrow $f$. Let $M$ denote a $Q$-representation assigning the matrix $M_f$ to $f$. Then, 
$$\mu\times\nu \in \QDr(M,(r,s);n) \Leftrightarrow \val(M_f)\odot\tropbar(\mu)\subseteq\tropbar(\nu).$$\end{theorem}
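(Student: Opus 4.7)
The plan is to translate both sides of the equivalence into explicit combinatorial conditions on cocircuits of $\mu$ and circuits of $\nu$, and then observe that these conditions coincide with the tropical quiver Pl\"ucker relations after an expansion. The starting tool is Proposition-Definition \ref{thm: characterizations of trop mu}, which realizes $\tropbar(\mu)$ as the tropical span of its valuated cocircuits $\mathcal{C}^*(\mu)$.

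First, using that tropical matrix multiplication distributes over tropical scalar multiplication and tropical addition (as already exploited in the proof of Lemma \ref{lem: image is tropically convex}), I would write
\[
\val(M_f)\odot\tropbar(\mu) \;=\; \Bigl\{\bigoplus_{C^*\in\mathcal{C}^*(\mu)} \lambda_{C^*}\odot\bigl(\val(M_f)\odot C^*\bigr) \,:\, \lambda_{C^*}\in\RR\Bigr\}.
\]
Since $\tropbar(\nu)$ is tropically convex by \cite[Theorem 1.1]{hampe2015convex}, the containment $\val(M_f)\odot\tropbar(\mu)\subseteq\tropbar(\nu)$ is equivalent to the finite system of conditions $\val(M_f)\odot C_\mu^*(I)\in\tropbar(\nu)$ for every $I\in\binom{[n]}{r-1}$.

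Second, I would unpack each of these membership conditions via the circuit definition of $\tropbar(\nu)$ (Definition \ref{def: tropicallinearspace}). Namely, $\val(M_f)\odot C_\mu^*(I)\in\tropbar(\nu)$ means that for every $J\in\binom{[n]}{s+1}$, the tropical polynomial
\[
\bigoplus_{i\in J}\nu(J\setminus i)\odot\bigl(\val(M_f)\odot C_\mu^*(I)\bigr)_i
\]
attains its minimum at least twice. Substituting the explicit formula $C_\mu^*(I)_j=\mu(I\cup j)$ for $j\notin I$ and $\infty$ for $j\in I$, and expanding
$(\val(M_f)\odot C_\mu^*(I))_i=\bigoplus_{j\notin I}\val((M_f)_{i,j})\odot\mu(I\cup j)$, this becomes
\[
\bigoplus_{i\in J,\,j\notin I}\val((M_f)_{i,j})\odot\mu(I\cup j)\odot\nu(J\setminus i),
\]
which is precisely the tropicalization of the quiver Pl\"ucker polynomial in $\mathscr{P}_{f;n}$ indexed by the pair $(I,J)$ in Definition \ref{def: quiver pluecker relations}; signs disappear in the tropical semiring. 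Quantifying over all $I\in\binom{[n]}{r-1}$ and $J\in\binom{[n]}{s+1}$ and recalling that the hypothesis that $\mu,\nu$ are valuated matroids encodes the tropical Grassmann-Pl\"ucker relations at the two vertices, this gives both implications of the desired equivalence.

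The main subtlety I anticipate is the bookkeeping for coordinates where entries of $\val(M_f)$ or cocircuits are $\infty$: the relevant tropical polynomial can collapse to a single monomial. This is handled cleanly by the convention adopted in the definition of $V(F)$ in Section \ref{sec prelims tropgeo}, where a tropical monomial is declared to automatically vanish; verifying that this convention makes the cocircuit/circuit bookkeeping go through on both sides is the only non-routine checkpoint, and once it is in place the argument is a direct matching of tropical polynomials.
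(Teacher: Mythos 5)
Your proposal is correct and follows essentially the same route as the paper's proof: both reduce the containment to checking $\val(M_f)\odot C^*_\mu(I)\in\tropbar(\nu)$ for each cocircuit via distributivity and tropical convexity (Lemma \ref{lem: image is tropically convex} and Proposition-Definition \ref{thm: characterizations of trop mu}), and then identify the resulting circuit-membership conditions with the tropical quiver Pl\"ucker relations, with the vertex relations accounting for $\mu,\nu$ being valuated matroids. The only difference is the direction of traversal (you start from the geometric side, the paper from the algebraic side), which is immaterial.
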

\proof The standard Grassmann-Pl\"ucker relations associated to the vertices vanish if and only if  $\mu$ and $\nu$ are valuated matroids. Thus, we only focus on the quiver Pl\"ucker relations. 
By definition, $\mu\times\nu \in \QDr(M,(r,s);n)$ if and only if for all $I\in\binom{[n]}{r-1}$ and $J\in\binom{[n]}{s+1}$, the minimum in
\begin{align*}
 \bigoplus_{j\in [n]\setminus I, i \in J } \big( \val((M_{f})_{i,j})\odot p_{I\cup j}\odot p_{J\setminus i} \big)
 \end{align*}
   is attained at least twice. Equivalently, for all $I$ and $J$ as above, the minimum in 
 \begin{align}\label{eq vaiety}
\bigoplus_{\substack{j\in[n]\setminus I,\\ i \in J}} \big( \val((M_{f})_{i,j})\odot \mu(I\cup j)\odot \nu(J\setminus i) \big) =  \bigoplus_{\substack{j\in[n]\setminus I,\\ i \in J}} \big( \val((M_{f})_{i,j})\odot C^{*}_{\mu}(I)_j\odot C_{\nu}(J)_i \big)
\end{align}
 is attained at least twice. We write $\val(M_f)\odot C^{*}_{\mu}(I)$ for the vector with coordinate entries $$\big(\val(M_{f})\odot C^{*}_{\mu}(I)\big)_j := \big(\val(m_{f,1,j})\odot C^{*}_{\mu}(I)_j\big)\oplus \cdots \oplus \big(\val(m_{f,n,j})\odot C^{*}_{\mu}(I)_j\big).$$  By distribution, the minimum in \eqref{eq vaiety} is attained twice if and only if 
$$\val(M_{f})\odot C^{*}_{\mu}(I)\in V\big(\bigoplus_{i \in [n]} C_{\nu}(J)_i\odot x_i\big)=\tropbar(\nu).$$
Finally,  by Lemma \ref{lem: image is tropically convex}, $\val(M_f)\odot\tropbar(\mu)$ is tropically convex. Using Proposition-Definition \ref{thm: characterizations of trop mu}, the above is thus equivalent to $$\Big\{\bigoplus_{C^*_{\mu}(I)\in \C^{*}(\mu)}\lambda_{C^*_{\mu}(I)}\odot\val(M_{f})\odot C^{*}_{\mu}(I)\hspace{3pt}:\hspace{3pt}\lambda_{C^*_{\mu}(I)}\in\RR\Big\}=\val(M_f)\odot\tropbar(\mu)\subseteq\tropbar(\nu).$$
\endproof

\begin{corollary}\label{theorem b}
    Let $M_f$ be a square weakly monomial matrix, and let $\mu$ and $\nu$ be matroids of ranks $r$ and $s$ over $[n]$. Then, $\val(M_f)\odot\tropbar(\mu)\subseteq\tropbar(\nu)$ if and only if $f:\nu\rightarrow\mu\times\RR$ as constructed in Proposition \ref{prop affine morphism is matrix multiplication} is an affine morphism of valuated matroids. Further, $\val(M_f)\odot\tropbar(\mu)\subseteq\tropbar(\nu)$ is realizable if and only if $f:\nu\rightarrow\mu\times\RR$ is a realizable affine morphism of valuated matroids.
\end{corollary}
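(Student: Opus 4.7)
The plan is to decompose the statement into two pieces corresponding to the two notions of maps between tropical linear spaces that the preceding subsections carefully reconciled, and then glue them via the matroid-quotient / tropical-linear-space-containment dictionary.

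First I would invoke Proposition \ref{prop affine morphism is matrix multiplication} to rewrite the left-hand side: since $M_f$ is a square weakly monomial matrix and $f$ is its associated map, we have $\val(M_f)\odot\tropbar(\mu)=\tropbar(f^{-1}(\mu))$. Hence the hypothesis $\val(M_f)\odot\tropbar(\mu)\subseteq\tropbar(\nu)$ is equivalent to the inclusion of tropical linear spaces $\tropbar(f^{-1}(\mu))\subseteq\tropbar(\nu)$. At this point the matroid sides have been matched: both the left-hand side and the right-hand side are tropical linear spaces of valuated matroids on the common ground set $[n]$, and the existence of $\tropbar(f^{-1}(\mu))$ as an honest valuated matroid is guaranteed by Lemma \ref{lemma affine morphism well defined}.

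Next I would apply the standard correspondence between containment of tropical linear spaces and valuated matroid quotients (as used in \cite[Section~4]{brandt2021tropicalflag} and following from Definition \ref{def: valuated circuit} together with Proposition-Definition \ref{thm: characterizations of trop mu}): for valuated matroids $\rho_1,\rho_2$ on the same ground set with $\mathrm{rk}(\rho_1)\leq\mathrm{rk}(\rho_2)$, $\tropbar(\rho_1)\subseteq\tropbar(\rho_2)$ if and only if $\rho_1\twoheadleftarrow\rho_2$. Applied to $\rho_1=f^{-1}(\mu)$ and $\rho_2=\nu$, this yields $f^{-1}(\mu)\twoheadleftarrow\nu$, which by Definition \ref{def: affinemorphism} is precisely the condition that $f:\nu\to\mu$ be an \emph{affine} morphism of valuated matroids. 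The rank condition $\mathrm{rk}(f^{-1}(\mu))\leq\mathrm{rk}(\nu)$ follows immediately from the containment once both sides are tropical linear spaces of valuated matroids, since dimensions of tropical linear spaces agree with ranks of the underlying matroids.

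For the realizability half, I would proceed by the same translation but carry through the realization data in both directions. Assuming $\val(M_f)\odot\tropbar(\mu)\subseteq\tropbar(\nu)$ is realizable, one has realizations $U$ of $\mu$ and $V$ of $\nu$ and a classical matrix $M_f$ with $M_f\cdot U\subseteq V$; the row span of $M_f\cdot U$ then realizes $f^{-1}(\mu)$ and the inclusion $M_f\cdot U\subseteq V$ realizes the quotient $f^{-1}(\mu)\twoheadleftarrow\nu$, making $f$ a realizable affine morphism per Definition \ref{def: affinemorphism}. Conversely, given a realizable affine morphism, pull back realizations along the matrix $M_f$ (using Definition \ref{def: associated map matrix} and Remark \ref{rmk construction associated maps}) to produce realizations witnessing the tropical containment.

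The main obstacle I would expect is ensuring that no realizability or rank pathology intervenes at the interface between Proposition \ref{prop affine morphism is matrix multiplication} and the matroid-quotient dictionary: specifically that \emph{every} realization of $f^{-1}(\mu)$ compatible with the containment arises from a realization of $\mu$ multiplied by a lift of $M_f$ (and not, say, only up to a further matroid isomorphism). This is precisely where the weakly monomial hypothesis is used in an essential way, since it guarantees that both the induced matroid construction on the matroid side and the tropical matrix multiplication on the geometric side preserve the class of tropical linear spaces, so that the translation is valid for \emph{all} (not merely generic) representatives.
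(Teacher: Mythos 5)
Your proposal is correct and follows essentially the same route as the paper's proof: rewrite $\val(M_f)\odot\tropbar(\mu)$ as $\tropbar(f^{-1}(\mu))$ via Proposition \ref{prop affine morphism is matrix multiplication}, then translate containment of tropical linear spaces into the quotient $f^{-1}(\mu)\twoheadleftarrow\nu$ via \cite[Theorem A]{brandt2021tropicalflag} and conclude by Definition \ref{def: affinemorphism}. Your treatment of the realizability half is more explicit than the paper's one-line appeal to the definition, but it is the same argument.
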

\begin{proof}
    By Proposition \ref{prop affine morphism is matrix multiplication}, there exists a map $f:[n]\cup \{o\}\rightarrow[n]\cup\{o\}\times\mathbb{T}$ such that $\val(M_f)\odot\tropbar(\mu) = \tropbar\big(f^{-1}(\mu)\big)$. 
    By \cite[Theorem A]{brandt2021tropicalflag},  $\tropbar\big(f^{-1}(\mu)\big) =\val(M_f)\odot\tropbar(\mu)\subseteq\tropbar(\nu)$ implies that $f^{-1}(\mu)|_{[n]} \twoheadleftarrow \nu$, i.e. by Definition \ref{def: affinemorphism}, that $f^{-1}$ is an affine morphism of valuated matroids. The realizability statement follows from Definition \ref{def: affinemorphism}.
\end{proof}
This shows Theorem \ref{thm: main_monomial_matrices} and the equivalence $(b)\Leftrightarrow(c)$ in Theorem \ref{thm: main_quiver_correspondence_realizable}.

\section{Realizability in quiver Dressians}\label{sec realizability}

In Section \ref{sec: prelims tropicalprojectivespace}, we remarked the difference between intrinsically tropical and tropicalized objects, and distinguished the Dressian, parametrizing \emph{tropical} linear spaces, from the tropicalized Grassmannians, parametrizing \emph{tropicalized} linear spaces. We observe a similar distinction for quiver Dressians and tropicalized quiver Grassmannians. 

The first example of a nonrealizeable tropical linear space, i.e. a tropical linear space that is not the tropicalization of any linear space, occurs in ambient dimension $8$. The first nonrealizable flag of tropical linear spaces already occurs for ambient dimension $6$ (see \cite[Example 5.2.4]{brandt2021tropicalflag}). For arbitrary quivers, the ambient dimension of the first nonrealizable quiver subrepresentation is even smaller.

\begin{remark}
    For ambient dimension 1, there are no classical Pl\"ucker relations, and the only quiver Pl\"ucker relations are the monomial relations corresponding to the coordinates of the only point in the quiver Grassmannian. On the algebraic side, $\QGr(R,\mathbf{d},1)$ is a point, hence $\tropbar(\QGr(R,\mathbf{d},1))\subseteq \PP(\TT^1)\times \cdots \times \PP(\TT^1)$ is a point. Since $\PP(\TT^1) \times \cdots \times \PP(\TT^1)$ is also just a point, the containment is an equality, and therefore $\tropbar(\QGr(R,\mathbf{d},1)) = \QDr(R,\mathbf{d},1)$.
\end{remark}

\begin{manualtheorem}{D}
     Let $Q$ be a finite quiver. For $n\geq2$, $\tropbar(\QGr(R, \mathbf{d};n))\subseteq\QDr(R,\mathbf{d};n)$ and there exist quiver representations $R$ where the containment is strict. For $n=1$, $\tropbar(\QGr(R, \mathbf{d};1))=\QDr(R,\mathbf{d};1)$ for any quiver $Q$ and any $Q$-representation $R$.
\end{manualtheorem}

The example for $\tropbar(\QGr(R, \mathbf{d};2))\neq\QDr(R,\mathbf{d};2)$ in ambient dimension 2 relies on the nontrivial valuation of the base field. Similar constructions can be given for higher ambient dimension, as described in \ref{ex nonrealizable n = 3}. However, the examples we construct afterwards for higher ambient dimension ($n\geq 4$) already occur for fields with trivial valuation, and their quivers have no parallel edges.  

\begin{example}\label{counterexample nonrealizable n 2}
We construct an example for $\tropbar(\QGr(R, \mathbf{d};2))\neq\QDr(R,\mathbf{d};2)$. The quiver we consider is known as the Kronecker quiver; we define its representation $R$ as shown in Figure \ref{fig: nonrealizable n 2}, with quiver Grassmannian $\QGr(R, (1,1);2)$ (see for instance \cite[Example 5]{irelli2020lectures}; in this case, we replace $\mathbb{C}$ with $\mathbb{C}\{\hspace{-3pt}\{ t \}\hspace{-3pt}\}$, the field of Puiseux series). It is an example of a reduced quiver Grassmannian of dimension 0 with two connected components (the two eigenspaces of the map corresponding to the lower arrow).

    Let $v_1$ and $v_2$ denote the Pl\"ucker variables of the space corresponding to the left vertex, and let $w_1$ and $w_2$ denote the Pl\"ucker variables of the right vertex.  Since $G(1;2)$ and $G(2;2)$ have no Grassmann-Pl\"ucker relations, the only relations are the quiver Pl\"ucker relations (see Definition \ref{def: quiver pluecker relations}), which are 
        $v_1w_2 + v_2w_1 \text{ and } v_1w_2 + (1+t)v_2w_1$.
    We have $$V(\langle v_1w_2 + v_2w_1, v_1w_2 + (1+t)v_2w_1 \rangle ) = \{((1:0),(1:0)), ((0:1),(0:1))\subset \PP^1 \times \PP^1\}, \text{ and }$$
    $$\tropbar(\QGr(R,(1,1),2)) = \{((0:\infty),(0:\infty)), ((\infty:0),(\infty:0))\subset \PP(\TT^2) \times \PP(\TT^2)\}.$$

    Tropicalizing the generators, we have that $V(\tropbar(v_1 w_2+v_2 w_1))$ is the set  $$W = \{((v_1:v_2),(w_1:w_2))\in \PP(\TT^2)\times\PP(\TT^2)|\min(v_1+w_2, v_2+w_1) \text{ is attained at least twice}\}.$$ Since $\val(1+t) = \val(1) = 0$, we further have $W = V(\tropbar(v_1w_2 + (1+t)v_2w_1))$, thus $W=\QDr(R,(1,1),2)$. Now, $W$ can be rewritten as $$W=\{((v_1:v_2),(w_1:w_2))\in  \PP(\TT^2)\times\PP(\TT^2) \;  | \; v_1+w_2 = v_2+w_1\},$$ which is a connected 1-dimensional space (that contains the two points above), whereas $\tropbar(\QGr(R,(1,1),2))$ is not.
\end{example}

\begin{figure}
    \centering
    \begin{tikzpicture}
        \draw[fill] (0,0) circle (2pt); 
        \draw[fill] (4,0) circle (2pt);
        \draw[->] (0.2,0.2) -- (3.8, 0.2);
        \draw[->] (0.2,-0.2) -- (3.8,-0.2);
        \node[above] at (2,0.3){$\begin{bmatrix}
        1 & 0 \\ 0&1
    \end{bmatrix}$};  \node[below] at (2,-0.3){$\begin{bmatrix}
        1 & 0 \\ 0&1+t
    \end{bmatrix}$};
    \node[above] at (0,0.3){$\mathbb{C}\{\hspace{-3pt}\{ t \}\hspace{-3pt}\}^2$};
    \node[above] at (4,0.3){$\mathbb{C}\{\hspace{-3pt}\{ t \}\hspace{-3pt}\}^2$}; \node[below] at (0,-0.3){$[1]$};
    \node[below] at (4,-0.3){$[1]$};
    \end{tikzpicture}
    \caption{A quiver $Q$ with $Q$-representation $R$ for $n=2$ where $\tropbar(\QGr(R, (1,1); 2)) \neq \QDr(R,(1,1); 2)$.}
    \label{fig: nonrealizable n 2}
\end{figure}

\begin{example}\label{ex nonrealizable n = 3}
To obtain an analogous example for ambient dimension 3, we can consider the same quiver as in Example \ref{counterexample nonrealizable n 2}. We assign $\mathbb{C}\{\hspace{-3pt}\{ t \}\hspace{-3pt}\}^3$ to each vertex, the matrix $\begin{bsmallmatrix}
        1 & 0 & 0 \\ 0&1 & 0 \\ 0 & 0 &1
\end{bsmallmatrix}$ to the upper arrow and $\begin{bsmallmatrix}
        1 & 0 & 0 \\ 0&1+t & 0 \\ 0 & 0 &1+t^2
\end{bsmallmatrix}$ to the lower arrow. Again, $G(1;3)$ has no Grassmann-Pl\"ucker relations, so the only Pl\"ucker relations are $$v_1w_2 - v_2w_1, v_1w_3+v_3w_1, v_2w_3-v_3w_2$$ for the upper arrow, and $$v_1w_2 - (1+t)v_2w_1, v_1w_3+ (1+t^2)v_3w_1, (1+t)v_2w_3-(1+t^2)v_3w_2$$ for the lower arrow. The zero locus of the six equations is zero-dimensional and consists of three points: $((1:0:0),(1:0:0)),((0:1:0),(0:1:0))$ and $((0:0:1),(0:0:1))$, so the tropicalization of the quiver Grassmannian does too. Again, as the valuations of all nonzero matrix entries is zero, the quiver Dressian is the set $$ \{(\mathbf{v},\mathbf{w})\in  \PP(\TT^3)\times\PP(\TT^3) \; | \; v_1+w_2 = v_2+w_1, v_1+w_3 = v_3+w_1 \text{ and } v_2 + w_3 = w_2 + v_3\}.$$
This set is 1-dimensional, thus the tropicalized quiver Grassmannian and the quiver Dressian differ. This example can similarly be extended to an example for higher ambient dimension $n$. Here, we assign $\mathbb{C}\{\hspace{-3pt}\{ t \}\hspace{-3pt}\}^n$ to both vertices, consider the dimension vector $(1,1)$ and assign the matrices to the two arrows as follows: one arrow is assigned the identity matrix, and the other arrow gets the diagonal matrix with entries $(1,1+t,1+t^2,\dots, 1+t^{n-1})$.
An example for a quiver representation of trivially valued fields over a quiver with no parallel edges can also be found, though it is significantly more complicated. We give an example at \url{https://victoriaschleis.github.io/code}. 
\end{example}

Now we give an example of a quiver representation $R$ satisfying $\tropbar(\QGr(R, \mathbf{d};4))\neq\QDr(R,\mathbf{d};4)$ over a field with trivial valuation, using a quiver without parallel edges. Afterwards, we will extend this to a family of such examples for $n>4$. 
\begin{example}\label{ex:nonrealizable quiver n = 4}
We return to the quiver given in Example \ref{ex:twotowersriver}. 

\begin{equation*}
Q,M: \begin{tikzcd}[ampersand replacement=\&, column sep= large, row sep = small]
 \& \overset{\CC^4}{\bullet} \ar[dr,"M_{\id}"] \& \\
\overset{\CC^4}{\bullet} \ar[ur,"M_{\id}"] \ar[dr, "M_{\id}"]\& \& \overset{\CC^4}{\bullet}\\
\& \overset{\CC^4}{\bullet} \ar[ur,"M_{\id}"]
\end{tikzcd}, \;
M_{\id}=\begin{bsmallmatrix}
1&0&0&0\\
0&1&0&0\\
0&0&1&0\\
0&0&0&1\\
\end{bsmallmatrix},\; d=(1,2,2,3).
\end{equation*}

This quiver Grassmannian parametrizes the arrangement of four tropical objects: two tropical lines that are contained in a common tropical plane, and a common point lying on all of them. In Figure \ref{fig:enter-label}, we give an example of such an arrangement.
 
From Definition \ref{def: quiver pluecker relations}, we obtain the following equations for the quiver Grassmannian $\QGr(M,d;4)$ inside the product of Grassmannians $\prod \Gr(d_i;4)$:
\begin{center}
\begin{tabular}{c c c}
    $p_{12}p_{34}-p_{13}p_{24}+p_{14}p_{23}$ & $p'_{12}p'_{34}-p'_{13}p'_{24}+p'_{14}p'_{23}$& $p_{12}p_{134}+p_{13}p_{124}+p_{14}p_{123}$ \\ $p_{12}p_{234}+p_{23}p_{124}+p_{24}p_{123} $ &$ p_{13}p_{234}+p_{23}p_{134}+p_{34}p_{123} $&$ p_{14}p_{234}+p_{24}p_{134}+p_{34}p_{124} $\\$ p'_{12}p_{134}+p'_{13}p_{124}+p'_{14}p_{123} $&$p'_{12}p_{234}+p'_{23}p_{124}+p'_{24}p_{123}$&$p'_{13}p_{234}+p'_{23}p_{134}+p'_{34}p_{123}$\\$p'_{14}p_{234}+p'_{24}p_{134}+p'_{34}p_{124}$&$p_{1}p_{23}+p_{2}p_{13}+p_{3}p_{12}$&$p_{1}p_{24}+p_{2}p_{14}+p_{4}p_{12}$\\$p_{1}p_{34}+p_{3}p_{14}+p_{4}p_{13}$&$p_{2}p_{34}+p_{3}p_{24}+p_{4}p_{23}$&$p_{1}p'_{23}+p_{2}p'_{13}+p_{3}p'_{12}$\\$p_{1}p'_{24}+p_{2}p'_{14}+p_{4}p'_{12}$&$p_{1}p'_{34}+p_{3}p'_{14}+p_{4}p'_{13}$&$p_{2}p'_{34}+p_{3}p'_{24}+p_{4}p'_{23}$
\end{tabular}
\end{center}
where we denote by $p'_{ij}$ the Pl\"{u}cker coordinates corresponding to the two-dimensional  subspace in the bottom row, denoted by $\Gr(d_3;4)$.

We use the code provided at \url{https://victoriaschleis.github.io/code} to compute the quiver Dressian and the tropicalized quiver Grassmannian in \texttt{gfan} \cite{gfan}, and do some auxiliary computations in \textsc{Oscar} \cite{Oscar}.

The quiver Dressian has dimension $12$ and f-Vector $(1,58,466,1156,858,3)$. The tropicalized quiver Grassmannian has dimension $10$, as does the ideal generated by the polynomials. Since the dimensions of the tropical (pre-)varieties differ, they cannot be equal, showing the second part of Theorem \ref{thm: main_realizability} for $n=4$. 

As a polyhedral complex, the tropicalized quiver Grassmannian is the union of the tropicalization of the $46$ primary components of the quiver Grassmannian. Of these components, 37 tropicalize to linear components of dimensions $8$, $7$, $6$ and $5$ in different coordinate directions. Each of the remaining nine components has, after quotienting out lineality, six rays and ten  facets, whose incidences are depicted in the graph in Figure \ref{fig:nonrealizable quiver n = 4 strata}.

\end{example}
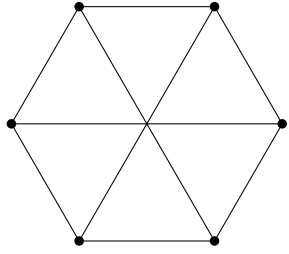
\begin{figure}
    \centering
    \begin{tikzpicture}[scale = 0.9]
    \foreach \i in {1,...,6}
         \fill (\i*360/6:2) coordinate (6\i) circle(2 pt);
         
    \draw (360/6:2) -- (2*360/6:2) -- (3*360/6:2) -- (4*360/6:2) -- (5*360/6:2) -- (0:2) --cycle
    (360/6:2) -- (4*360/6:2) 
    (2*360/6:2) -- (5*360/6:2)(3*360/6:2) --(0:2) ;
    \end{tikzpicture}
    \caption{The nonlinear irreducible components of Example \ref{ex:nonrealizable quiver n = 4} are linear spaces of dimension $10$ and $8$ over the graph above.} 
    \label{fig:nonrealizable quiver n = 4 strata}
\end{figure}

\begin{corollary}
 For $n\geq4$, there exists a quiver $Q$ with a representation $M'$ such that $\tropbar(\QGr(M',\mathbf{d};n))\subsetneq\QDr(M',\mathbf{d};n)$.   
\end{corollary}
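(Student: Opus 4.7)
The plan is to pad Example \ref{ex:nonrealizable quiver n = 4} with trivial matroid summands. The case $n=4$ is Example \ref{ex:nonrealizable quiver n = 4} itself. For $n\geq 5$, I would keep the same quiver $Q$, replace $\CC^{4}$ by $\CC^{n}$ at every vertex, assign the $n\times n$ identity to every arrow to obtain a representation $M'$, and set $\mathbf{d}':=(1,2,2,3)+(n-4)\mathbf{1}$. Fix a witness
\[
\boldsymbol{\mu}=(\mu_1,\mu_2,\mu_3,\mu_4)\in\QDr(M,\mathbf{d};4)\setminus\tropbar(\QGr(M,\mathbf{d};4))
\]
from Example \ref{ex:nonrealizable quiver n = 4} and define $\boldsymbol{\mu}':=(\mu'_1,\ldots,\mu'_4)$ by $\mu'_i:=\mu_i\oplus U_{n-4,n-4}$, i.e., adjoining $\{5,\ldots,n\}$ to the ground set as trivially valuated coloops. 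The goal is to show $\boldsymbol{\mu}'\in\QDr(M',\mathbf{d}';n)\setminus\tropbar(\QGr(M',\mathbf{d}';n))$.

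For the Dressian inclusion, I would observe that a point $(x,y)\in\PP(\TT^n)$ with $x\in\TT^4$ in the first four coordinates and $y\in\TT^{n-4}$ in the remaining coordinates lies in $\tropbar(\mu_i\oplus U_{n-4,n-4})$ if and only if $x\in\tropbar(\mu_i)$. Since each arrow of $M'$ is the identity, Theorem \ref{prop: equivalence quiver dressian containment after tropical flag} reduces the Dressian condition for $\boldsymbol{\mu}'$ to $\tropbar(\mu_{s(\alpha)})\subseteq\tropbar(\mu_{t(\alpha)})$ in $\PP(\TT^{4})$, which holds by the same theorem applied to $\boldsymbol{\mu}$.

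For non-realizability, I would argue by contradiction. Suppose there is a subrepresentation $(L'_i)_{i\in V}$ of $M'$ with $\tropbar(L'_i)=\tropbar(\mu'_i)$ for every $i$. Equality of tropical linear spaces forces equality of the associated valuated matroids, so $L'_i$ realizes $\mu_i\oplus U_{n-4,n-4}$; in particular, the new elements $\{5,\ldots,n\}$ are coloops. This coloop structure lets me choose a basis of $L'_i$ whose matrix has the block form $\begin{bsmallmatrix} A_i & 0 \\ B_i & I_{n-4}\end{bsmallmatrix}$, and the identity $p_{B\cup\{5,\ldots,n\}}(L'_i)=\det(A_i[B])$ for all $B\in\binom{[4]}{d_i}$ then shows that $A_i$ realizes $\mu_i$. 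Setting $L_i:=L'_i\cap(\CC^{4}\times 0)=\mathrm{rowspan}(A_i)$, the inclusions $L'_i\subseteq L'_j$ restrict to $L_i\subseteq L_j$, producing a subrepresentation of $M$ realizing $\boldsymbol{\mu}$, which contradicts $\boldsymbol{\mu}\notin\tropbar(\QGr(M,\mathbf{d};4))$.

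The main obstacle is the slicing step, where we need $L_i=L'_i\cap(\CC^4\times 0)$ to inherit not only the correct underlying matroid of $\mu_i$, but the full valuated matroid. The coloop structure of $\mu'_i$ is precisely what forces this: it yields a block decomposition of $L'_i$ whose leading block $A_i$ preserves the Pl\"ucker data of $\mu_i$ unchanged, so the slicing operation is simultaneously realization- and containment-preserving.
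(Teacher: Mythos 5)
Your proof is correct, but it takes a genuinely different route from the paper's. The paper keeps the dimension vector $\mathbf{d}=(1,2,2,3)$ fixed, enlarges the ambient space to $K^n$ by padding the arrow matrices with zeros, asserts that the defining Pl\"ucker relations are unchanged, and concludes from the dimension gap $\dim\tropbar(\QGr(M,\mathbf{d};4))=10<12=\dim\QDr(M,\mathbf{d};4)$ computed in Example \ref{ex:nonrealizable quiver n = 4}. You instead shift the dimension vector to $\mathbf{d}+(n-4)\mathbf{1}$, pad each matroid with trivially valuated coloops, and transport a single explicit witness point; your coloop block-form and slicing argument then reduces realizability over $[n]$ to realizability over $[4]$. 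Your version is more self-contained: it only needs the existence of one point of $\QDr(M,\mathbf{d};4)\setminus\tropbar(\QGr(M,\mathbf{d};4))$, rather than the transfer of a dimension count to the padded representation (which in the paper's setup is not entirely immediate, since over $[n]$ with the same $\mathbf{d}$ there are strictly more Pl\"ucker variables and relations). The price is length, plus two small points worth tidying: (i) the stated equivalence that $(x,y)$ lies in $\tropbar(\mu_i\oplus U_{n-4,n-4})$ if and only if $x\in\tropbar(\mu_i)$ fails when $x=(\infty,\dots,\infty)$ --- such points lie in every $\tropbar(\mu'_j)$ vacuously, so the containment argument is unaffected, but the ``only if'' as written is false; (ii) to pass from $\boldsymbol{\mu}'\in\tropbar(\QGr(M',\mathbf{d}';n))$ to an actual realizing subrepresentation you should choose the witness $\boldsymbol{\mu}$ with coordinates in the value group, which is possible because both complexes are rational and differ in dimension --- this is the same level of care the paper itself applies when invoking the Fundamental Theorem.
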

\begin{proof}
Let $n\geq 4$. We consider the quiver representation $M$ of Example \ref{ex:nonrealizable quiver n = 4}, and construct a quiver representation $M'$ by substituting each base set on the vertices by $[n]$. For each matrix, we append $n-4$ zero columns. This way, $\QGr(M',\mathbf{d};n)$ has the same Pl\"ucker relations as $\QGr(M,\mathbf{d};4)$. Since $\dim(\tropbar(\QGr(M,\mathbf{d};4)) < \dim(\QDr(M,\mathbf{d};4))$, we obtain  that $\dim(\tropbar(\QGr(M',\mathbf{d};n))) < \dim(\QDr(M',\mathbf{d};n))$. Hence, $\tropbar(\QGr(M',\mathbf{d};n))\subsetneq\QDr(M',\mathbf{d};n)$.
\end{proof}

This concludes the proof of Theorem \ref{thm: main_realizability}.

	\bibliographystyle{plain}
	\bibliography{bibliography.bib}

	\vspace{0.25cm}
	\noindent
	\textsc{Giulia Iezzi, RWTH Aachen, Pontdriesch 10-16, 52062 Aachen, Germany}\\
	\textit{Email:} iezzi@art.rwth-aachen.de
	\vspace{0.25cm}
	
	\noindent
	\textsc{Victoria Schleis, Universit\"at T\"ubingen, Auf der Morgenstelle 10,
		72076 T\"ubingen,
		Germany} \\
	\textit{Email :} victoria.schleis@student.uni-tuebingen.de
\end{document}